\documentclass[11pt]{amsart}
\usepackage{palatino}
\usepackage{amsmath}
\usepackage{amssymb, graphicx, epsfig}


\newtheorem{theorem}{$\quad$Theorem}[section]
\newtheorem*{theorem*}{$\quad$Theorem 1.1}

\newtheorem{lemma}[theorem]{$\quad$Lemma}

\newtheorem{definition}[theorem]{$\quad$Definition}

\newcommand{\R}{{\mathbb R}}
\newcommand{\C}{{\mathbb C}}
\newcommand{\Z}{{\mathbb Z}}

\newcommand{\pa}{\partial}

\newcommand{\krn}{\operatorname{ker}}

\theoremstyle{definition}
\newtheorem{remark}[theorem]{$\quad$Remark}
\newenvironment{example}{{\bf Example. }}{}

\newcounter{bibno}

\begin{document}

\title[Legendrian contact homology\dots]{Legendrian contact homology \\ in the product of a punctured \\ Riemann surface and the real line}
\author{Johan Bj\"orklund}
\thanks{Address: Uppsala University, Box 480, 75106 Uppsala, SWEDEN}
\date{2015}
\keywords{Contact geometry, Legendrian contact homology, Legendrian isotopy}
\thanks{E-Mail:bjorklund@math.uu.se}

\begin{abstract}We give a combinatorial description of the Legendrian differential graded algebra associated to a Legendrian knot in $P\times\R$, where $P$ is a punctured Riemann surface. As an application we show that for any integer $k$ and any homology class $h\in H_1(P\times\R)$ there are $k$ Legendrian knots all representing $h$ which are pairwise smoothly isotopic through a formal Legendrian isotopy but which lie in mutually distinct Legendrian isotopy classes.   
\end{abstract}

\maketitle

\clearpage

\section{Introduction}

In this paper we study Legendrian knots in $P\times\R$, where $P$ is a punctured Riemann surface. Here the symplectic form $\omega$ on the Riemann surface is exact, $\omega=d\theta$ and the contact form on $P\times\R$ is $\alpha=dz-\theta$, where $z$ is a coordinate along the $\R$-factor, and a knot is said to be \emph{Legendrian} if it is everywhere tangent to the contact distribution $\xi=\krn(\alpha)$. The \emph{Reeb vector field} $R$ of a contact form $\alpha$ is characterized by $d\alpha(R,\cdot)=0$ and $\alpha(R)=1$. In the case $P\times\R, \alpha=dz-\theta, R=\pa_z$.

Note that the differential of the {\em Lagrangian projection} $\pi\colon P\times\R\to P$ is an isomorphism when restricted to the contact planes in $\xi$. Pulling back the complex structure on $P$ to $\xi$ we get a complex structure $J$ compatible with $d\alpha$. Let $K$ be an oriented Legendrian knot. Then $K$ comes equipped with an induced framing $E_K=(e_1,e_2,e_3)$, where $e_1$ is the tangent vector of $K$, $e_2=Je_1$, and $e_3=\pa_z$. We say that two Legendrian knots $K_0$ and $K_1$ are \emph{formally Legendrian isotopic} if there exists a smooth isotopy $K_t$ of framed knots with framing $E_t=(e_1^t,e_2^t,e_3^t)$ such that $e_1^t$ and $e^t_2$ lie in $\xi$, $e_3^t$ does not lie in $\xi$, and such that $E_s=E_{K_s}$, $s=0,1$. Furthermore, we say that $K_0$ and $K_1$ are \emph{Legendrian isotopic} if there exists a smooth isotopy $K_t$ such that $K_{t_0}$ is a Legendrian knot for each $t_0\in[0,1]$. Any Legendrian isotopy is a formal Legendrian isotopy.     

Chekanov \cite{CHEK} and Eliashberg \cite{ELIA}, showed that there exist formally Legendrian isotopic knots in $\R^{2}\times\R$ which are not Legendrian isotopic using Legendrian contact homology. Both proofs utilized linearized contact homology, a theory which was later incorporated in the theoretical framework by Eliashberg, Givental and Hofer in \cite{EGA} introducing symplectic field theory. Legendrian contact homology associates a differential graded algebra (DGA) to a Legendrian knot $K$. The DGA is generated by Reeb chords on $K$, i.e. flow lines of $R$ starting and ending on $K$ and the differential is given by a holomorphic curve count in the symplectization of the contact manifold. The quasi-isomorphism type of the DGA (in particular its homology) is invariant under Legendrian isotopy. In \cite{ekholm1} Ekholm, Etnyre and Sullivan worked out the details of Legendrian contact homology in the case of a contact manifold of the form $P\times\R$ where $P$ is an exact symplectic manifold of any even dimension $2n$. If $\Lambda\subset P\times\R$ then $\pi\colon \Lambda\to P$ is a Lagrangian immersion and Reeb chords of $\Lambda$ corresponds to double points of this immersion. In \cite{ekholm1}, a complex structure on the contact planes which is pulled back from an almost complex structure on $P$ were used. For such a complex structure, holomorphic disks in $P\times\R$ with boundary on $\Lambda\times\R$ can be described in terms of holomorphic disks in $P$ with boundary on $\pi(\Lambda)$ and the DGA of $\Lambda$ was shown to invariant under Legendrian isotopies up to stable tame isomorphism. 
In dimension $2+1$ the contact homology can be defined combinatorially by using the Riemann mapping theorem. This was first observed by Etnyre, Ng and Sabloff in \cite{SABLOFF2}. In this paper we describe how to compute the Legendrian contact homology combinatorially when $P$ is a punctured Riemann surface. Similar situations has also been studied by other authors, e.g. Sabloff in \cite{SABLOFF1} who studies Legendrian contact homology in circle bundles and Ng and Traynor in \cite{NGTREY} where they give a combinatorial interpretation of contact homology in $J^1(S^1)$.  
If $K$ is a Legendrian knot in $P\times\R$ then the DGA of $K$ is generated by crossings of the knot diagram of $K$ in $P$ and the differential can be computed by counting rigid holomorphic disks with boundary on the knot diagram. By the Riemann mapping theorem, such disks correspond to immersed polygons in $P$ with boundary on the knot diagram. We give detailed definitions of this DGA in Section \ref{DGAsection}. In order to construct and work with Legendrian knots in $\R^{2}\times\R$ it is often more convenient to work with knot diagrams in the \emph{front projection}: if $\theta=y\,dx$ then the front projection projects out the $y$-coordinate. For generic knots the front diagram is a self transverse immersion without vertical tangents away from a finite number of semi-cubical cusps. Such a diagram determines the knot completely and it was shown by Ng in \cite{NG1} how to recover a Lagrangian diagram from a front diagram and hence how to compute the DGA.

In Section \ref{diagrams} we introduce the notion of a front diagram for Legendrian knots in $P\times\R$ for $P\ne \R^2$. Unlike the Lagrangian diagram (where we just project to $P$) the classical notion of the front diagram cannot be directly translated (since there is no natural $y$-coordinate to project out). We represent $P$ as a square $D=I^2$ with thin open $1$-handles attached along a distinguished boundary segment $I_0=\{1\}\times I$ at points $p_i^\pm$. We consider an {\emph open handle} to be a thin rectangle which we attach along the short sides.  For Legendrian knots in $D\times \R$ (where the contact form is $dz-ydx$) the front diagram is simply the classical front diagram. By using the Liouville flow to put the "non-affine" Legendrian knots in a standard position we can use this structure to define a notion of a front diagram for these Legendrian knots as well. The front diagrams for Legendrian knots not contained in $D\times \R$ are then defined as the projections of generic Legendrian arcs in $D\times\R$ to $I\times\{0\}\times\R$ with properties as described below. The boundary points of each arc is required to lie in a neighborhood of $p_i^{\pm}\times\R$ obeying certain 1-jet conditions. Furthermore we introduce the notion of a front isotopy in terms of Reidemeister moves , three moves from the classical front diagrams in $\R^3$ and two additional moves corresponding to pushing cusps and double points through the open handles (see Figure \ref{frontmoves23}). Theorem \ref{frontthe} then states that if $K_0$ and $K_1$ are any two Legendrian isotopic knots represented by front diagrams $F_0$ and $F_1$ then there exist a front isotopy from $F_0$ to $F_1$. We show how to recover a Lagrangian diagram from a front diagram and thereby get a combinatorial description of the Legendrian DGA in terms of fronts.

\begin{remark}
While Ng's work makes it possible to directly compute the DGA from the front diagram in the $\R ^2\times \R$ case, we will use the Lagrangian diagram for our calculations. While any given Legendrian knot in $P\times\R$ might need to be put in a standard position (using the Liouville flow) before examining its front diagram, it could immediately be projected to $P$ to get its Lagrangian diagram.     
\end{remark}

The main strength of the front diagram (both in the classical case and in our case) is that it is easy to construct a Legendrian knot with some given front diagram $D$. Using our front diagrams to obtain certain Legendrian knots and using the combinatorial description of the Legendrian DGA in Section \ref{DGAsection} for knots in $P\times\R$ we then establish the following result in Section \ref{exsection}.

\begin{theorem}\label{t:main}
For any $h\in H_1(P\times\R)$ and any positive integer $k$ there exists Legendrian knots $K_1,\dots, K_k$ realizing the homology class $h$ such that $K_i$ and $K_j$ are formally Legendrian isotopic but $K_i$ and $K_j$ are not Legendrian isotopic if $i\ne j$, $i,j\in\{1,\dots,k\}$.  
\end{theorem}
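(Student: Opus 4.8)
The plan is to realize the $k$ knots as explicit front diagrams that are distinguished by their Legendrian contact homology while being formally Legendrian isotopic. Since the paper has already (Section~\ref{DGAsection}) provided a combinatorial computation of the DGA from a front diagram, and (Theorem~\ref{frontthe}) shown that front isotopy generates Legendrian isotopy, the entire problem reduces to a bookkeeping construction plus two separate invariance checks: one formal (to make the knots formally isotopic) and one via contact homology (to keep them distinct). I would first fix a base Legendrian knot $K_0$ representing the prescribed class $h\in H_1(P\times\R)$, using the handle-attachment picture of $P$ to route $K_0$ through whichever $1$-handles realize $h$ (the homology class is read off from which handles the knot traverses and with what multiplicity). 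Then I would produce $K_1,\dots,K_k$ by attaching to $K_0$, in a small coordinate ball $D\times\R$ disjoint from the handles, a family of local ``stabilization-like'' or Chekanov-type modifications that do not change the homology class.

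\smallskip
Concretely, the cleanest route is to import the Chekanov--Eliashberg phenomenon locally. In $\R^2\times\R$ one has, for each $m$, a pair (or family) of Legendrian knots that are formally isotopic but have non-isomorphic DGAs, the invariants being distinguished by linearized contact homology or by the count of augmentations/rulings. I would take such a local model and glue one copy into the fixed ball $D\times\R\subset P\times\R$, varying a discrete parameter so as to obtain $k$ distinct local pieces $L_1,\dots,L_k$, and set $K_i=K_0\#L_i$ where the connect-sum is performed inside the ball. Because the ball is contractible and disjoint from the handles, every $K_i$ represents the same class $h$, and a formal Legendrian isotopy between any $K_i$ and $K_j$ can be built by first doing the formal isotopy of the local models $L_i\simeq L_j$ (all the $L_i$ are smoothly the same framed knot and carry the same rotation/Thurston--Bennequin data by design) and extending by the identity outside the ball; the framing condition on $E_t$ is local and is verified in the standard $\R^2\times\R$ model.

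\smallskip
The invariance step that makes the knots pairwise Legendrian-\emph{in}equivalent is where the combinatorial DGA of Section~\ref{DGAsection} does the work. I would compute, for each $K_i$, a Legendrian isotopy invariant extracted from the DGA --- the most robust choice being the set of Poincar\'e polynomials of linearized contact homologies over all augmentations, or simply the graded isomorphism type of the homology of the DGA --- and arrange the local models so that these invariants are pairwise distinct. Here one must be careful that the crossings coming from the handles and from $K_0$ contribute a common ``background'' DGA that is identical for all $i$, so that the distinction comes purely from the local piece; the connect-sum should be set up so that the generators split as a disjoint union (no new mixed Reeb chords with small action linking $K_0$ to $L_i$), which is guaranteed by performing the connect-sum in a region where the front is a trivial strand and taking the local piece with sufficiently small action. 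Then the DGA of $K_i$ is (tame-isomorphic to) a product/amalgam of the background DGA with the DGA of $L_i$, and the linearized homologies distinguish the factors.

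\smallskip
The main obstacle I expect is precisely this gluing/locality analysis: one must prove that embedding the $\R^2\times\R$ model into the contractible ball inside $P\times\R$ preserves both its formal-isotopy class and, more delicately, its contact-homological invariant, i.e. that no holomorphic disks (immersed polygons, in the combinatorial picture) stray out of the ball to connect local crossings with the global diagram. With the combinatorial description this becomes a finite, checkable statement about which immersed polygons with boundary on the front can exist --- an action/area argument shows that any such polygon involving a local crossing must stay within the ball provided the local piece has small enough area --- but verifying it rigorously, together with matching the formal data ($\tb$, rotation number, and the framing homotopy) across all $k$ knots, is the technical heart of the proof. The remaining steps (choosing $k$ local models with distinct invariants, confirming the homology class, assembling the formal isotopy) are then routine given the machinery already established in the earlier sections.
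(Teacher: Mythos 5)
Your overall strategy is the same as the paper's: attach copies of Chekanov's formally-isotopic-but-not-Legendrian-isotopic pair to a base knot realizing $h$, keep the attachments local so that the DGA splits, and distinguish the resulting knots by the set of Poincar\'e polynomials of linearized contact homology. But there is a genuine gap at your very first step, where you ``fix a base Legendrian knot $K_0$ representing the prescribed class $h$'' with no further conditions. The background DGA of an arbitrary such $K_0$ is not a harmless common factor: if $K_0$ is, for instance, a stabilized representative of $h$, its diagram contains a crossing $b$ bounding an embedded teardrop, so $db=1$ over $\Z_2$; then the DGA admits no augmentation and is acyclic, and every $K_i$ built by your connect sum inherits this. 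Consequently the set of augmentation polynomials is empty and the contact homology vanishes identically for \emph{all} $i$, so the invariant you propose distinguishes nothing, no matter how cleverly the local pieces are chosen and no matter how well the polygons are localized. This is precisely why the paper proves Lemma \ref{KH}: in every nontrivial class $h$ it constructs (via the explicit curves $A,B,C$ and the horizontal braids $T_n$) a representative $K_h$ whose diagram admits no immersed polygon with fewer than two positive punctures, so that $d\beta_i=0$ for all background generators, the background contributes one fixed polynomial $p(\lambda)$ to every augmentation, and the local contributions $i\lambda^2+i\lambda^{-2}+(2k-2-2i)$ survive. Remark \ref{polygonremark} shows this property is impossible for null-homologous knots, so producing such a base knot is genuinely delicate; it is the technical heart of the paper's proof, not the routine bookkeeping your proposal treats it as.

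A second, smaller omission concerns the grading. In $P\times\R$ the DGA is graded only modulo the Maslov number $M(K)$, and for a knot traversing handles $M(K)$ depends on the chosen trivialization of $TP$ along the handles. If $M(K)$ happened to be small (say a divisor of $4$), the polynomials $\lambda^2+\lambda^{-2}+p(\lambda)$ and $2+p(\lambda)$ would coincide and the distinguishing computation would collapse. The paper handles this by choosing the trivialization so that $M(K)$ is arbitrarily large (Remark \ref{stormaslov}), observing that this changes only the degrees of the $\beta$-crossings and not those of the Chekanov and attachment crossings. Your proposal never addresses this, so even granting a good base knot and perfect locality of polygons, the final comparison of invariants is incomplete as stated.
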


Theorem \ref{t:main} is proved in Section \ref{exsection}. The proof makes use of knots $K$ in classes $h\ne 0$ which have the property that there are no holomorphic disks with one positive puncture and boundary on $K$. (There are no null-homologous knots that satisfy this condition, see Remark \ref{polygonremark}.)

\section{Preliminaries}\label{prel}
In this section we discuss the geometrical and algebraic setup to define contact homology in $P\times\R$.
In Subsection \ref{construct} we show how to give a punctured Riemann surface a contact form $\alpha$ such that it fulfills the demands in Subsection 2.1 in \cite{ekholm1}, allowing contact homology to be defined. In Subsection \ref{prelch} we briefly discuss contact homology. 

\subsection{Construction of the contact form on $P\times \R$}\label{construct}
Constructing contact manifolds by surgery was studied by Weinstein \cite{WEIN}. We consider, the simplest case, surgery on symplectic surfaces.
A \emph{Liouville vector field} $L$ on a symplectic manifold $M$ is a smooth vector field such that the Lie-derivative of the symplectic form $\omega$ along $L$ is again the symplectic form. Let $D$ be the standard unit disk in $\R^2$ (with coordinates $(x,y)$) defined by $x^2+y^2\leq1$ and let $L$ be the radial symmetric outward pointing Liouville vector field $L:=L(x,y)=(x,y)$. We recall that a Riemann surface is a complex manifold of complex dimension one (and so has real dimension two and a natural complex structure). 
\begin{lemma}\label{pliouville}
Let $P$ be a genus $g$ Riemann surface with boundary which has $p+1$ punctures. Then $P$ admits a  Liouville vector field $L$ with the following properties : $L$ is outwards transverse at the boundary, $L$ has $p+1+2g$ zeros, one with Morse index $0$, and $p+2g$ points with Morse index $1$ for a given Morse function $f$ such that $L$ is gradient like with respect to $f$.
\end{lemma}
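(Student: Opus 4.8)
The plan is to build the Liouville field by the Weinstein surgery procedure of \cite{WEIN}, assembling $P$ from one $0$-handle and $p+2g$ $1$-handles, each carrying an explicit local Liouville model, and then gluing. This matches the picture of the introduction in which $P$ is a square $D=I^2$ with thin $1$-handles attached along $I_0$. The point of the handle-by-handle construction is that the Morse data and the Liouville data are both produced locally and then assembled, so that the global zero count is just additive over the handles.

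First I would fix the topology. Since $P$ is connected, compact and oriented of genus $g$ with nonempty boundary, realizing the $p+1$ punctures as boundary circles gives $\chi(P)=2-2g-(p+1)=1-p-2g$, and $P$ deformation retracts onto a $1$-complex. Hence $P$ admits a handle decomposition with a single $0$-handle, $p+2g$ $1$-handles, and no $2$-handle; the number of $1$-handles is forced by $\chi(P)=1-(p+2g)$. This is the combinatorial skeleton of the desired Morse function $f$: one index-$0$ critical point, $p+2g$ index-$1$ critical points, and --- because there is no $2$-handle --- no index-$2$ critical point, so that the top level set of $f$ is exactly $\pa P$ and $L$ can be arranged outward transverse there.

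Next I would put Liouville models on the handles. On the $0$-handle $D$ use the radial field $L_0=(x,y)$, which vanishes only at the origin, an index-$0$ zero of $f_0=\tfrac12(x^2+y^2)$, and is outward transverse along $\pa D$. On each $1$-handle use the standard hyperbolic Weinstein model, the Liouville field of the saddle $\tfrac12(x^2-y^2)$, whose gradient $(x,-y)$ has a single index-$1$ zero, points inward along the two attaching arcs (along the contracting $y$-direction) and outward along the free boundary (the expanding $x$-direction). The field $L_0$ is gradient-like for $f_0$ and each handle field is gradient-like for its saddle function, so it remains only to glue them along $I_0\subset\pa D$, rescaling in a symplectic collar so that the fields agree, so that the glued $L$ is smooth, is again a Liouville field $\bigl(\mathcal{L}_{L}\omega=\omega\bigr)$, is gradient-like for the glued Morse function $f$, and stays outward transverse on the remaining boundary. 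The zeros are then one of index $0$ and one of index $1$ per handle, i.e.\ $p+1+2g$ in all.

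The main obstacle is precisely this gluing step. I must verify the Weinstein compatibility conditions in this two-dimensional setting: that the disk field and each handle field can be matched on the overlap by a symplectomorphism of collar neighborhoods so that no spurious zeros are created in the gluing region, that outward transversality of $L_0$ along the attaching arcs of $\pa D$ is correctly traded for the inward transversality of the handle field there, and that the primitive $\theta=\iota_L\omega$ remains globally well defined so that $\alpha=dz-\theta$ is a genuine contact form downstream. All of this is Weinstein's construction \cite{WEIN} specialized to surgery on surfaces; once the models are glued, the remaining checks --- that $f$ is Morse with exactly the listed critical points and that $L$ is gradient-like throughout --- are local and routine.
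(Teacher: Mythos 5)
Your overall strategy is the same as the paper's: assemble $P$ from one $0$-handle carrying the radial field and $p+2g$ Weinstein $1$-handles carrying saddle models, glue following \cite{WEIN}, and read off the zero count additively ($1$ of index $0$, $p+2g$ of index $1$). The Euler-characteristic bookkeeping and the gluing outline are consistent with what the paper does.

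However, your local model on the $1$-handles fails as written: the gradient $(x,-y)$ of the symmetric saddle $\tfrac12(x^2-y^2)$ is \emph{not} a Liouville vector field. In dimension two one has $\mathcal{L}_L\omega=(\operatorname{div}L)\,\omega$ for $\omega=dx\wedge dy$, and $\operatorname{div}(x,-y)=0$; equivalently $\iota_{(x,-y)}\omega=d(xy)$ is closed, so this field preserves area instead of expanding it, and no constant rescaling fixes this since the divergence stays zero. A Liouville field must have everywhere-positive (constant) divergence, which forces the saddle model to be \emph{anisotropic}: the expanding eigenvalue must strictly dominate the contracting one, e.g.\ $L=2x\,\partial_x-y\,\partial_y$, which is genuinely Liouville and gradient-like for the Morse function $x^2-\tfrac12y^2$ with a single index-$1$ zero. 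This is precisely why Weinstein's model, and the paper's proof, use the asymmetric quadratic $2y^2-x^2$ (coefficients $2$ and $1$) rather than the symmetric saddle; the paper moreover handles your separate gluing step implicitly, by deforming the region $2y^2-x^2\ge\epsilon$ from $\epsilon=1$ to $\epsilon=-1$ while keeping the associated gradient field, so that the two boundary pieces become connected without creating spurious zeros. Once you replace your handle model by the anisotropic one (and match the expansion constants of the disk field and handle fields in the collar), your argument goes through and essentially coincides with the paper's.
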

\begin{proof}
For the case where $p=g=0$ take the disc $D=P$ with $L(x,y)=(x,y)$ and we are done. Any other (punctured, connected) Riemann surface can be constructed from $D$ by identifying intervals at the boundary. Given two points, $a,b$ on the boundary of $D$ we glue them together, following \cite{WEIN} by locally representing the two pieces as the solution to $F=2y^2-x^2\geq\epsilon=1$ where the boundary is defined by equality. See Figure \ref{glue2} for an illustration of the Liouville vector field before and after the gluing.
\begin{figure}
\includegraphics[scale=0.45]{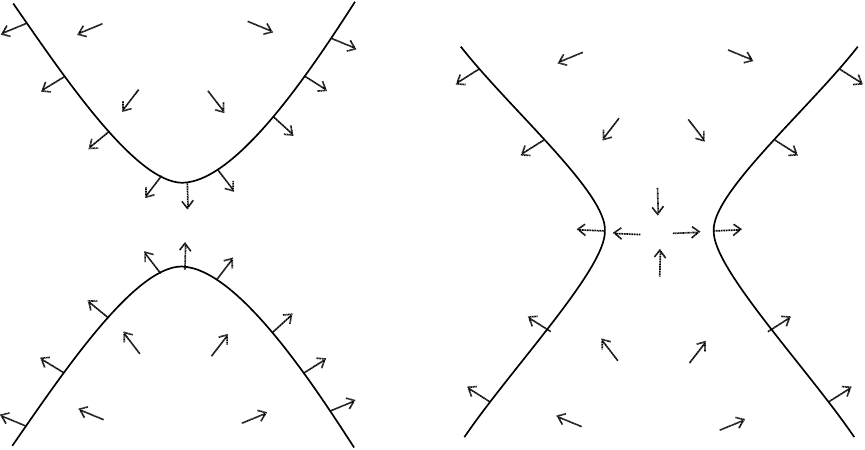}
\caption{The Liouville vectorfield during a handle attachement}
\label{glue2}
\end{figure}

Locally, we can assume that the Liouville vector field $L$ is the gradient vector field of $F$. By letting $\epsilon$ go from $1$ to $-1$, keeping the gradient vector field, we end up with $2y^2-x^2\geq-1$, connecting the two pieces together, keeping $L$ directed outward. This process creates a new critical point of Morse index $1$.  The resulting manifold will have a Liouville vector field $L$ again pointing outwards, with an open handle attaching $a$ to $b$. We continue the attachment process until we have constructed $P$. We need to attach $p+2g$ open handles, resulting in $p+2g$ additional index $-1$ zeros.
\end{proof}
Due to our construction, we consider the interior of our original disc to be the affine part of $P$, and in the same way we consider this affine part times $\R$ to be the affine part of $\P\times\R$ (since it models $\R^3$).

\begin{lemma}\label{preeb}
If $P$ is a $p+1$ punctured Riemann surface of genus $g$ then $P\times\R$ admits a contact form $\alpha$ such that it has finite geometry at infinity according to Definition 2.1 in  \cite{ekholm1}, furthermore, the Reeb vector field will be $\partial_z$ where $z$ is the $\R$ coordinate.
\end{lemma}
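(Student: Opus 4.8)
The plan is to construct $\alpha$ directly from the Liouville data furnished by Lemma \ref{pliouville}. Let $\omega$ denote the symplectic form on $P$ for which $L$ is a Liouville vector field, so that by definition $\mathcal{L}_L\omega=\omega$, and put $\theta:=\iota_L\omega$. Since $\omega$ is closed, Cartan's formula yields $\omega=\mathcal{L}_L\omega=d\iota_L\omega+\iota_L\,d\omega=d\theta$, the last equality because $\omega$ is closed; hence $\omega$ is exact with primitive $\theta$. This is precisely the one-form needed to write $\alpha:=dz-\theta$ on $P\times\R$, where $z$ is the coordinate on the $\R$-factor and $\theta$ is understood to be pulled back along the projection to $P$.

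Next I would verify the two pointwise assertions, both of which reduce to short computations. Because $\theta$ involves no $dz$, we have $d\alpha=-d\theta=-\omega$, and since $\omega$ is a top-degree form on the surface $P$,
\begin{equation*}
\alpha\wedge d\alpha=(dz-\theta)\wedge(-\omega)=-\,dz\wedge\omega
\end{equation*}
is a volume form on $P\times\R$, so $\alpha$ is contact. For the Reeb field $R$, the equation $d\alpha(R,\cdot)=-\omega(R,\cdot)=0$ together with the nondegeneracy of $\omega$ forces the $P$-component of $R$ to vanish, whence $R=c\,\pa_z$; as $\theta(\pa_z)=0$, the normalization $\alpha(R)=1$ gives $c=1$ and hence $R=\pa_z$, as required.

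The substantive part, and the step I expect to be the main obstacle, is checking that $\alpha$ is finite at infinity in the sense of Definition 2.1 of \cite{ekholm1}. Here I would exploit that $L$ is outward transverse along $\partial P$: then $\beta:=\theta|_{\partial P}$ is a nowhere-vanishing one-form (a contact form) on the boundary circles, and flowing outward along $L$ identifies a neighborhood of infinity with a half-symplectization $\partial P\times[0,\infty)$ on which $\theta=e^{t}\beta$, with $t$ the Liouville parameter. Completing $P$ along these cylindrical ends produces an exact symplectic surface that coincides with a symplectization outside a compact core, and $P\times\R$ then inherits the required model near infinity. The difficulty I anticipate is matching every clause of Definition 2.1 with the output of the handle-by-handle construction of Lemma \ref{pliouville}: one must confirm that, after completion and perhaps a deformation of $\theta$ supported near the ends, the one-form is genuinely cylindrical there, that the interior zeros of $L$ and the attached handles remain within the compact core, and that a compatible almost complex structure can be chosen so that all of the finiteness conditions in \cite{ekholm1} hold simultaneously.
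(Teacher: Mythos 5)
Your proposal follows essentially the same route as the paper: both take the Liouville vector field $L$ from Lemma \ref{pliouville}, form $\theta=\iota_L\omega$, and set $\alpha=dz-\theta$, with finiteness at infinity attributed to the expanding Liouville flow near the ends. In fact your write-up is more detailed than the paper's own proof, which omits the verification of the contact condition and the Reeb field computation and simply asserts that finite geometry at infinity is straightforward, whereas you spell these out and correctly identify the cylindrical-end structure as the substantive point.
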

\begin{proof}
Construct $P$ with the associated Liouville vector field as described in Lemma \ref{pliouville}. Contract the symplectic form with the Liouville vector field on $P$. This will result in a 1-form $L'$. Let $\alpha=dz-L'$ be the contact form on $P\times \R$. It is straightforward to see that the form has finite geometry at infinity. 
\end{proof}
\begin{remark}
While the definition of finite geometry at infinity is slightly technical, its use is to ensure that no holomorpic discs travel off to infinity. Since the Liouville vector field is expanding, this cannot happen in our case. Each puncture can be compactified to a boundary component with outward pointing Liouville vectorfield. We can attach a cylinder $S^1\times\R_+$ to this puncture, expanding the Liouville vectorfield as fast as necessary.
\end{remark}

\subsection{Contact homology}\label{prelch}
In general the Legendrian contact homology is an algebra over the contact homology algebra of the ambient contact manifold, generated by Reeb orbits. The Reeb vector field on $P\times\R$ does not have any Reeb orbits (since the Reeb vector field is $\partial_z$) and so the Legendrian contact homology is generated by Reeb chords. Contact homology associates a differential graded algebra to a Legendrian sub-manifold $K$ of some contact manifold $M$. The algebra is generated by the chords of the Reeb vector field which ends and starts at $K$. The differential of a chord counts rigid holomorphic discs with punctures on the boundary lying in $P$ such that the boundary is mapped to $\pi(K)$ under certain restrictions. The stable tame isomorphism class of this DGA is an invariant of $K$ up to Legendrian isotopy.
Henceforth, we will assume that $P\times \R$ is given a contact form $\alpha$ as described in Lemma \ref{preeb}. Recall Theorem 1.1 from \cite{ekholm1}, where it is assumed that $P$ has finite geometry at infinity and that $\alpha$ is a contact form expressible as $dz-\theta$ where $\theta$ is a primitive of the symplectic form on $P$. Both conditions are fulfilled for our construction of $P$. This theorem tells us that we have a well-defined contact homology.

\begin{theorem}[Ekholm, Etnyre, Sullivan]
The contact homology of Legendrian sub-manifolds of $(P\times\R,\alpha)$ is
well defined. In particular the stable tame isomorphism class of the DGA associated
to a Legendrian sub-manifold $K$ is independent of the choice of compatible almost
complex structure and is invariant under Legendrian isotopies of $K$.
\end{theorem}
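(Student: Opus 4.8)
The statement is recalled verbatim from \cite{ekholm1}, so the plan is not to reprove it from scratch but to reduce our setting to the hypotheses of that theorem. The plan is to verify the two standing assumptions under which Theorem~1.1 of \cite{ekholm1} is established: first, that $(P,\omega)$ is finite at infinity in the sense of Definition~2.1 of \cite{ekholm1}, and second, that the contact form on $P\times\R$ has the shape $\alpha=dz-\theta$ with $d\theta=\omega$. Both were arranged in Subsection~\ref{construct}: Lemma~\ref{preeb} contracts $\omega$ with the Liouville vector field $L$ of Lemma~\ref{pliouville} to produce a primitive $\theta=\iota_L\omega$ and sets $\alpha=dz-\theta$, while the outward transversality of $L$ at the boundary is exactly what supplies finiteness at infinity. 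Once these two points are confirmed, the conclusion follows immediately by quoting \cite{ekholm1}.

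To make the quotation legitimate I would first spell out why the geometry at infinity is controlled. Because $L$ points outward along each end and is gradient-like for the Morse function $f$, a neighborhood of each puncture is modeled on a symplectic half-cylinder on which $L$ expands $\omega$, which is precisely the local model demanded by Definition~2.1 of \cite{ekholm1}. The Remark following Lemma~\ref{preeb} already records the geometric upshot: since $L$ is expanding, no holomorphic disk with boundary on $\pi(K)$ can escape to infinity, so the relevant moduli spaces remain compact.

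For completeness I would then recall the three ingredients that the hypotheses feed into. The identity $\partial^2=0$ comes from the analysis of the boundary of the one-dimensional moduli spaces of rigid holomorphic disks: after establishing transversality for a generic compatible $J$, the boundary consists of two-level broken configurations, and these are exactly the terms appearing in $\partial^2$. Independence of the almost complex structure is obtained by taking a generic path $J_t$ joining two regular structures and using the parametrized moduli spaces to build a stable tame isomorphism of the two DGAs. Invariance under Legendrian isotopy is obtained by decomposing a generic isotopy into elementary moves and checking that at each bifurcation---in particular at the birth or death of a canceling pair of Reeb chords---the DGA changes only by a stabilization followed by a tame isomorphism.

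The real obstacle in the underlying argument is analytic rather than formal: compactness, transversality, and gluing for the moduli spaces of punctured holomorphic disks, together with the Fredholm and index theory that makes the word \emph{rigid} meaningful. In the present reduction, however, the only work is to confirm that the expanding Liouville geometry of Lemma~\ref{pliouville} provides exactly the compactness input these arguments require; everything else is quoted directly from \cite{ekholm1}.
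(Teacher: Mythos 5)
Your proposal matches the paper's treatment exactly: the paper does not reprove the result but quotes Theorem~1.1 of \cite{ekholm1}, observing that the two standing hypotheses---finiteness at infinity (Definition~2.1 there) and the form $\alpha=dz-\theta$ with $\theta$ a primitive of $\omega$---are supplied by the construction in Lemmas \ref{pliouville} and \ref{preeb}. Your additional sketch of the internal ingredients ($\partial^2=0$, independence of $J$, isotopy invariance) goes beyond what the paper records but is consistent with the cited source, so the reduction is the same.
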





\section{A DGA for Legendrian knots in $P\times \R$.}\label{DGAsection}

In this section we give a combinatorial definition of the DGA $A_K$ which is associated to a connected Legendrian sub-manifold $K\subset P\times\R$ based on the geometric definition in \cite{ekholm1}. For a more detailed description of the classical case see Chekanov \cite{CHEK}. We start by defining the graded unital algebra in Subsection \ref{defalgebra} and continue by defining the differential in Subsection \ref{defdifferential}.
In Subsection \ref{linearizedch} we explain how to use linearized contact homology to distinguish DGA's up to stable tame isomorphisms.

\subsection{Defining the Graded Algebra}\label{defalgebra}
Let $K$ be a connected Legendrian sub-manifold in $P\times\R$. Following Subsection $2.2$ in \cite{ekholm1} the algebra of $K$ is generated by chords of the Reeb vector field $\partial_z$. For generic Legendrian knots, the only self-intersections of the projection to $P$ are transverse double points which will correspond to the Reeb chords and hence generate the algebra. Let $A_K:=\Z[a_1,a_2,...,a_n]$ be a unital algebra where $a_i$ are the self-intersections of the Lagrangian diagram of $K$.
This defines the algebra. It is still necessary to define a grading on the algebra. For this we need to define the Maslov index.

\subsubsection{Maslov index}
The Maslov index is described in more detail by Ekholm, Etnyre and Sullivan in Subsection 2.2 of \cite{ekholmr2n} for a more general setting. Given a smooth path $L$ in $P$ such that it forms a continuous loop, it induces a path in the space of Lagrangian subspaces along $L$ (the condition of being Lagrangian is simply being a linear subspace in our case, since the symplectic form is the volume form on $P$). Given a trivialization of the tangent bundle $TP$ along $L$, the loop $L$ induces a path $\tau_L$ in the linear subspaces of $\R^2$, i.e in $\R P^1$. If the path closes up (i.e. $\tau_L(0)=\tau_L(1)$), we let the degree of the induced map $\tau_L:S^1\rightarrow \R P^1$ be the Maslov index $M(L)$. Note that for smooth loops the Maslov index is always even, since the map factors over $S^1$ (that is, the unit vectors in $\R^2$). The Maslov index then depends on a trivialization of $TP$ along the open handles $h_i$ for it to be defined for loops (which close up) of nonzero homotopy in $P$. The consequence of this choice is briefly discussed in Remark \ref{stormaslov}.
\subsubsection{Gradings in $A_k$}
The \emph{degree} $|a|$ of a double point $a$ is defined as follows. The generator $a$ is a double point of the Lagrangian diagram. We can assume that the branches of the Lagrangian diagram are transverse at double points. Then the preimage of $a$ under the projection will consist of two points $a_+$ and $a_-$ with $a_+$ having the higher $z$-coordinate. There are two paths in $K$ directed from $a_+$ to $a_-$, fix one of them as $\gamma$. While $\gamma$ is a loop in $P$, its induced path $\tau_\gamma$ does not close up (due to transversality at $a$), and so we need to attach a small path to $\tau_\gamma$. The attachment is done by simply continouing the path in $\R P^1$ with a positive direction until it closes up. The new path is then called $\tau_\gamma'$.  
Then $deg(\tau_\gamma')-1=|a|$. Let $\bar\gamma$ be the other choice. Then $M(K)=\pm M(\gamma-\bar\gamma)=deg(\tau_\gamma')-deg(\tau_{\bar\gamma}')$. We will let the algebra $A_K$ be graded modulo $M(K)$ and thus independent of the choice of $\gamma$. 
\begin{remark}\label{stormaslov}
In some cases (depending on the homology class of $K$, and how $K$ behaves over $P$) the trivializations along $h_i$ can be chosen such that $M(K)$ is zero, allowing a $\Z$ grading. If $K$ is nontrivial in the homology then the trivialization can be always be chosen such that $M(K)$ is arbitrarily large. Later on we will have a grading counted modulo $M(K)$. Letting the $M(K)$ be arbitrarily large allows us to simulate a $\Z-$grading.
\end{remark}
The differential graded algebra for a Legendrian sub-manifold $K$ presented in \cite{ekholm1} has coefficients in the ring $\Z[H_1(K)]$ with the algebra being graded modulo $c(P,\omega)$ where $c(P,\omega)$ is calculated by evaluating twice the first Chern class of $TP$ (equipped with its almost complex structure $J$) on $H_2(P,\omega)$. The Chern class is trivial in our case, initially giving a $\Z$ grading. The generators of $H_1(K)$ are given a grading of their Maslov indexes. In our case, $H_1(K)\simeq\Z$, and is generated by $K$. We calculate the algebra with coefficients in $\Z$ instead of $\Z[H_1(K)]$, giving a grading modulo $M(K)$


\subsection{Defining the Differential}\label{defdifferential}
We demand that the differential $d$ is linear and obeys the signed Leibniz rule. It is then left to define the differential on generators $a$. 
In Subsection 2.3 in \cite{ekholm1} the differential counts rigid holomorphic discs in $P$ with punctures on the boundary such that the punctures are asymptotic to Reeb chords/double points and the boundary admits a continuous lift to the the Legendrian sub-manifold. In contrast to the more general higher dimension case described in \cite{ekholm1} the count can be reduced to combinatorics due to the Riemann mapping theorem. We will begin by giving a combinatorial definition and later describe the connection to the geometrical definitions in \cite{ekholm1}. The combinatorial description is based on polygons. Let $P_n$ be the convex polygon with $n+1$ corners (we consider $P_0$ and $P_1$ to be "polygons", modeled by a teardrop and a an optical lens respectively) with one distinguished corner. Let the corners be indexed by $\tau_0,...,\tau_n$ ordered by the orientation of the boundary of the polygon, where $\tau_0$ is the distinguished corner. We say that $\tau_0$ is positive and the other marked points are negative. In the Lagrangian diagram we also have corners where the knot self-intersects. We give signs to these corners according to Figure \ref{markedsigns2}.
\begin{definition} 
Let $F_i$ be the set of orientation preserving immersions $f$ of $P_i$ into $P$ (up to orientation preserving diffeomorphisms of $P_i$ mapping $\tau_0$ to $\tau_0$) such that 
\begin{itemize}
 \item $f(\partial P_i)\subset\pi(K)$                                                                                                                                                                                                                                                                                                                                                                                                                                                                                                                                                                                                                                                                                                                          \item $f$ maps $\tau_i$ to double points of $\pi(K)$
 \item $f$ maps neighborhoods of $\tau_i$ to locally convex corners 
 \item $f$ maps the neighborhood of $\tau_0$ into a positive corner 
 \item $f$ maps the neighborhoods of $\tau_i, i>0$ into negative corners                                                                                                                                                                                                                                                                                                                                                                                                                                                                                                                                                                                                                                                                                              \end{itemize}
\end{definition}
Let the product $f(\tau_1)\cdot\dots\cdot f(\tau_i)$ be denoted by $\textbf{b}_f\in A_K$. Note that the product may be empty since the empty word is also considered to be a word.

\begin{definition}\label{differentialdefinition}
Let $K$ be a Lagrangian diagram of some Legendrian knot. Let $a$ be a crossing in $K$. Then $da=\sum_{i=0}^\infty d_ia,$ where $$d_i a=\sum_{f\in F_i:f(\tau_0)=a}w_f\textbf{b}_f$$
\end{definition}
The coefficient $w_f=\pm1$ is a sign associated to the holomorphic disc represented by $f$ which we discuss in \ref{signs}. To avoid the problem of choosing signs of discs, we will restrict ourselves to an algebra over $\Z_2$ in later calculations, allowing us to give the weight $1$ to every disc.

\subsubsection{Polygons and Holomorphic discs}
We recall that a $J$-holomorphic disc in some space $X$ with an (almost) complex structure $J$ is a smooth map from the unit disc $D\subset\C$ into $X$ which is continuous on the boundary and such that the complex structure on $\C$ is (almost) complex linear in the interior. In our case $X=P$ and since $P$ is a Riemann surface, we have a natural complex structure (and thus the map is holomorphic in the interior). We wish to study $0$-dimensional moduli spaces of such discs (up to conformal parametrizations) which are transversally cut out. These are called rigid holomorphic discs and are more closely described in \cite{ekholm1}.
In \cite{ekholm1} the differential $\delta$ of a Reeb chord $a$ counts rigid holomorphic discs with punctures on the boundary such that the boundary of the disc lies on $\pi(K)$ and the punctures on the boundary are sent to double points. Furthermore, we consider one puncture to be a marked point, and require that this puncture is sent to $a$. We can use the Riemann mapping theorem to understand the connection between our immersed convex polygons and rigid holomorphic discs. Given an immersed polygon $P_i$ we can lift the complex structure from $P$ to $P_i$. Then the Riemann mapping theorem gives us a mapping from the unit disc in $\C$ with a marked puncture to $P_i$, giving us a rigid holomorphic disc in $P$. Conversely, any rigid holomorphic disc can be considered an immersed polygon (with convex corners, etc). The correspondence between corners on the boundary of the polygon and punctures on the disc is straightforward. Following \cite{ekholm1} a convex corner in the Lagrangian diagram is said to be positive if traveling along the border of the holomorphic disc takes you from an upper to a lower branch of the knot and negative if it takes you from a lower to an upper branch as depicted in Figure \ref{markedsigns2}. 

\subsubsection{Signs of holomorphic discs}\label{signs}
While computing the algebra $A_K$ over $\Z_2$ gives rise to a useful invariant, it is possible to define the algebra over $\Z$ by giving $w_f$ an appropriate sign. There are several possible ways to assign signs, of which not all are equivalent. Here we describe one combinatorial way to assign signs. Given an immersion $f$ of a polygon $P_i$ let the number of shaded corners in the image of the corners of $P_i$ be $c_f$ following the shading rule in Figure \ref{markedsigns2}. Then $w_f=(-1)^{c_f}$.\\ Signs are assigned to holomorphic discs using a system of coherent orientations, introduced by Etnyre, Ng and Sabloff in \cite{SABLOFF2}. Assigning signs to holomorphic discs is somewhat complicated, and the situation is described by Ekholm, Etnyre and Sullivan in Section 4 in \cite{ekholmoich}. While they consider the contact manifold $\R^{2n}\times\R$ it is possible to use their combinatorial description from 4.5.1 in \cite{ekholmoich} (where $n=1$) to obtain a the combinatorial formula we will use. In \cite{ekholmoich} the combinatorial formula is obtained by comparing certain diagram orientations (see 4.5.4) to coherent orientations induced from basic choices and a spin structure on $K\subset P\times\R$ (see 4.5). We chose the Lie group spin structure on $K$. To relate their combinatorial description to the sign convention described above, choose an immersion of $P\rightarrow\R^2=\C$. Using this immersion we can pull-back diagram orientations and coherent orientations form $\R^2$, allowing us to use the combinatorial descriptions of signs in their paper. Choosing the other spin structure (the null-cobordant spin structure) on $K$ would then change the signs according to Remark 4.35 in \cite{ekholmoich}.

\begin{figure}
\includegraphics[scale=0.5]{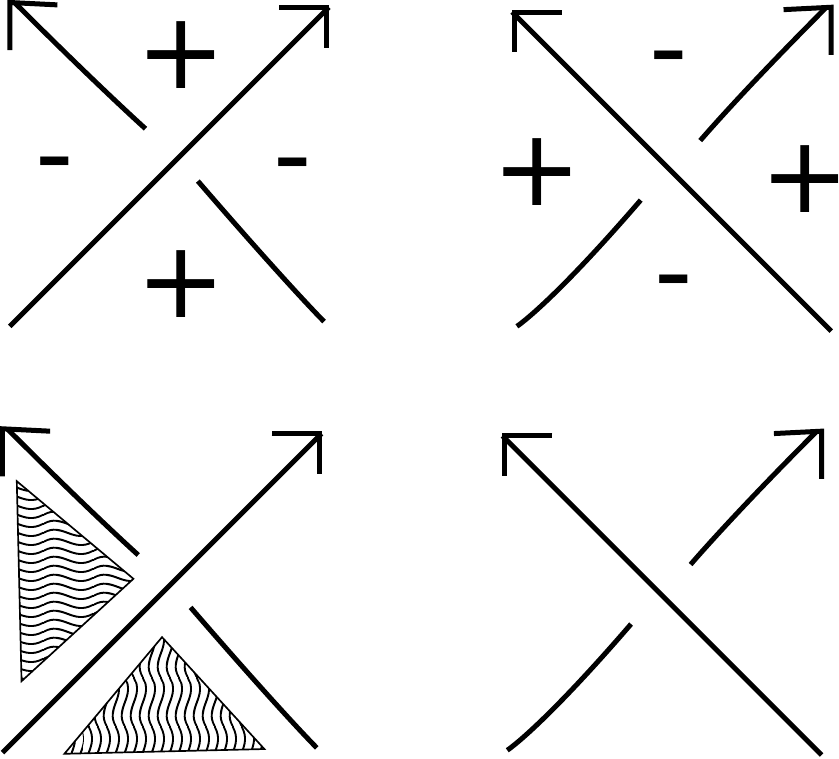}
\caption{The upper crossings show which corners are negative and which are positive. The lower two crossings show the markings assigning signs to the differential.}
\label{markedsigns2}
\end{figure} 

\begin{remark}
Since our construction has followed the construction in \cite{ekholm1} it follows that the differential and the grading are well behaved giving a DGA; for instance, $d^2=0$ and $d$ decreases the degree by one.
\end{remark}

The following theorem holds true for our choice of $P$ as a $p+1$ punctured genus $g$ Riemann surface with contact structure constructed as in Section \ref{prel} and algebras constructed as above.
\begin{theorem}
Let $K,K'$ be two Legendrian knots in $P\times \R$. If $K$ is Legendrian isotopic to $K'$ then there exists a stable tame isomorphism between the associated graded differential algebras $A_K$ and $A_K'$. It then follows that $HC_*(A_{K})$ is isomorphic to $HC_*(A_{K'})$ as graded algebras.
\end{theorem}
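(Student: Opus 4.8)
The plan is to reduce the statement to the invariance theorem of Ekholm, Etnyre and Sullivan (Theorem 1.1 of \cite{ekholm1}, recalled above) by identifying the combinatorial DGA $A_K$ built in this section with the geometric DGA that they associate to $K$. Since the setup in Section \ref{prel} arranges that $(P\times\R,\alpha)$ is finite at infinity with Reeb field $\pa_z$ and no Reeb orbits, their theorem applies and asserts that the stable tame isomorphism class of the geometric DGA is a Legendrian isotopy invariant. Thus it suffices to prove that the combinatorial and geometric DGAs coincide.

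First I would match the generators and the grading. Both algebras are generated, over $\Z$ (equivalently $\Z_2$), by the transverse double points of the Lagrangian projection $\pi_P(K)$, which are exactly the Reeb chords of $K$; this identification is immediate. For the grading, the combinatorial degree $|a|$ was defined through the Maslov index of the capped path $\tau_\gamma'$, which is precisely the Conley--Zehnder type index computed in \cite{ekholm1} in terms of the linearized return map along a Reeb chord. I would check that these two prescriptions agree modulo $M(K)$, so that the two gradings coincide.

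The main step is to match the differentials. Here I would invoke the Riemann mapping theorem as indicated above: an orientation-preserving immersion $f\in F_i$ of the polygon $P_i$ into $P$, with boundary on $\pi_P(K)$ and locally convex corners of the prescribed signs, determines, after pulling back the complex structure of $P$, a rigid $J$-holomorphic disc with boundary on $\pi_P(K)$ and boundary punctures asymptotic to the corresponding double points; conversely every rigid holomorphic disc contributing to the geometric differential $\delta$ arises this way. The key points to verify are that this assignment is a bijection between $F_i$ and the relevant $0$-dimensional moduli space, that rigidity corresponds exactly to the immersed-polygon condition, and that no disc can escape to infinity---the latter being guaranteed by the expanding Liouville vector field of Lemma \ref{pliouville} (cf.\ the Remark following Lemma \ref{preeb}). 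Finally I would check the signs: the combinatorial weight $w_f=(-1)^{c_f}$ defined in \ref{signs} is arranged, via an immersion $P\to\C$ and the Lie group spin structure on $K$, to reproduce the coherent orientation signs of \cite{ekholmoich}, so that $d=\delta$ on generators and hence on all of $A_K$.

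The hardest part will be this sign verification together with the exhaustiveness of the polygon--disc correspondence, since one must be sure that the combinatorial rule really matches the coherent orientations and that every rigid disc is accounted for with correct multiplicity; however, as the construction has been modeled on \cite{ekholm1,ekholmoich} throughout, this amounts to carefully transcribing their conventions. Once $A_K$ and $A_{K'}$ are identified with the geometric DGAs, the cited theorem yields a stable tame isomorphism $A_K\cong A_{K'}$. The final assertion about $HC_*$ is then a purely algebraic fact: a tame isomorphism is a chain isomorphism and therefore induces an isomorphism on homology, while each stabilization adjoins a pair of generators forming an acyclic summand, so it leaves homology unchanged; hence stable tame isomorphic DGAs have isomorphic homology, giving $HC_*(A_K)\cong HC_*(A_{K'})$ as graded algebras.
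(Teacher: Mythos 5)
Your proposal is correct and follows essentially the same route as the paper: the paper's proof is a one-sentence appeal to Proposition 2.6 of \cite{ekholm1}, justified by the fact that the combinatorial DGA was constructed to match the geometric one. Your write-up simply makes explicit the generator/grading/differential/sign identifications that the paper leaves implicit in the phrase ``follows the geometrical definitions in Section 2 of \cite{ekholm1}.''
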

\begin{proof}
Since the combinatorial calculation of $A_K$ follows the geometrical definitions in Section 2 of \cite{ekholm1} we can use Proposition 2.6 in their paper stating the Theorem above.
\end{proof}

\subsection{Linearized Contact Homology}\label{linearizedch}
It can be difficult to show that two DGA's are not stable tame isomorphic. One tool to show that two DGA's are not stable tame isomorphic is linearized contact homology, originally introduced by Chekanov in \cite{CHEK}. In linearized contact homology a new differential is constructed acting on the DGA filtered by word length. Let $A$ be a DGA with coefficients from a field (in our case $\Z_2$). We can filter $A$ by word length by letting $A_n$ be the vector space generated by words of length at most $n$ in the generators. We say that a differential is augmented if for each generator $a\in A$ we have that $da$ contains no constant term. An augmented differential acting on $A$ induces a differential $d_1$ on $A_1$ by acting with $d$ on some element in $A_1$ and then projecting back to $A_1$. Since $d_0=0$ the word length never decreases. The linearized contact homology of $A$ is then $ker(d_1)\slash Im(d_1)$. To any such homology we have an associated Chekanov-Poincar\'{e} polynomial $p(\lambda)$  where the coefficient before $\lambda^n$ is the dimension of the $n$-graded components of $ker(d_1)\slash Im(d_1)$ (where the grading is $A$'s original grading). To construct an augmented differential $d'$ from a differential $d$ we let $d'=c\circ d\circ c$ where $c$ is zero on the coefficient ring and maps generators $a_i$ to $a_i+c_i$ where $c_i$ lies in the coefficient ring such that $c_i=0$ if $|a_i|\neq0$. The map $c$ is then called an augmentation if it leads to an augmented differential. The set of Chekanov-Poincar\'{e} polynomials arising from augmentations is an invariant up to stably tame isomorphism.

\section{Knot diagrams}\label{diagrams}
In this section we describe how to construct appropriate diagrams for Legendrian knots, and how to construct Legendrian knots starting from diagrams.
In Subsection \ref{modelP} we examine $P\times\R$ in more detail and present how to model it as a square with the standard contact structure together with some marked intervals representing where the open handles attach.
We introduce a notion of front diagrams and front diagram isotopy for Legendrian knots in $P\times \R$ in Subsection \ref{frontdiagramsss}. We also show that any pair of knots which are Legendrian isotopic have corresponding front diagrams which are front diagram isotopic. Given a front diagram $F$ we show how to get the Lagrangian diagram $\pi(K)$ for a Legendrian knot $K$ which has $F$ as its front diagram.

\subsection{Modeling $P$ together with its contact form}\label{modelP}
Recall from Section \ref{prel} that the Liouville vector field $L$ was used to construct the Liouville form $L'$ on $P$. Flowing along the Liouville vector field and the Reeb vector field is a Legendrian isotopy. Note that $D\times\R\subset P\times\R$. The contact form $\alpha$ on $P\times \R$ restricts to a contact form on $D\times\R$. We calculate this contact form explicitly. Recall that on the disc we had the Liouville vector field $L:=L(x,y)=(x,y)$. Contracting the symplectic form with $L$ gives $L'=xdy-ydx$ and constructing $\alpha$ gives $\alpha=dz-xdy+ydx$. We construct a contactomorphism from the the standard form $dz-ydx$ by the change of variables
$$x'=(x+y)/2,y'=(x-y)/2,z'=z+xy.$$
Note that this variable change commutes with the projection to $D$. We also that see the Reeb vector field $\partial_{z}=\partial_{z'}$. While this change of variable changes the Liouville vector fields expression in terms of the new coordinates, in the projection to $P$ the vector field $L$ is still repulsive around the origin of $D$. We will henceforth use the standard contact form on $D\times \R$ and denote the coordinates by $x,y,z$. Furthermore, we will model $D$ not as a disc but as a square with the sides parallel to the $x$ and $y$-coordinates.
We model $P$ as this square with some subintervals of the right hand side identified, corresponding to the open handle attachment in the construction of $P$, the identified intervals are then considered to be connected by some open handle. The identification is done respecting the orientation on $\partial D$. We call this the \emph{square model} of $P$. It is well known that any punctured Riemann surface can be obtained by starting with a disc and attaching a number of annuli and a number of punctured tori (that is, tori with a disc removed). Each attached punctured torus corresponds to two open handles, where the second open handle connects two different boundary components and each annulus corresponds to one open handle. For an example with three open handles see Figure \ref{annulitori}. For a $p+1$ punctured, genus $g$ Riemann surface, $p$ annuli and $g$ tori needs to be attached. Since we are allowed to attach the open handles wherever we want, we can choose to construct them along the righthand side and furthermore, attach them in such a way that open handles which come from annuli have positive $y$-coordinates and open handles coming from tori have negative $y$-coordinate, see Figure \ref{annulitori} for an example.
\begin{figure}
\includegraphics[scale=0.35]{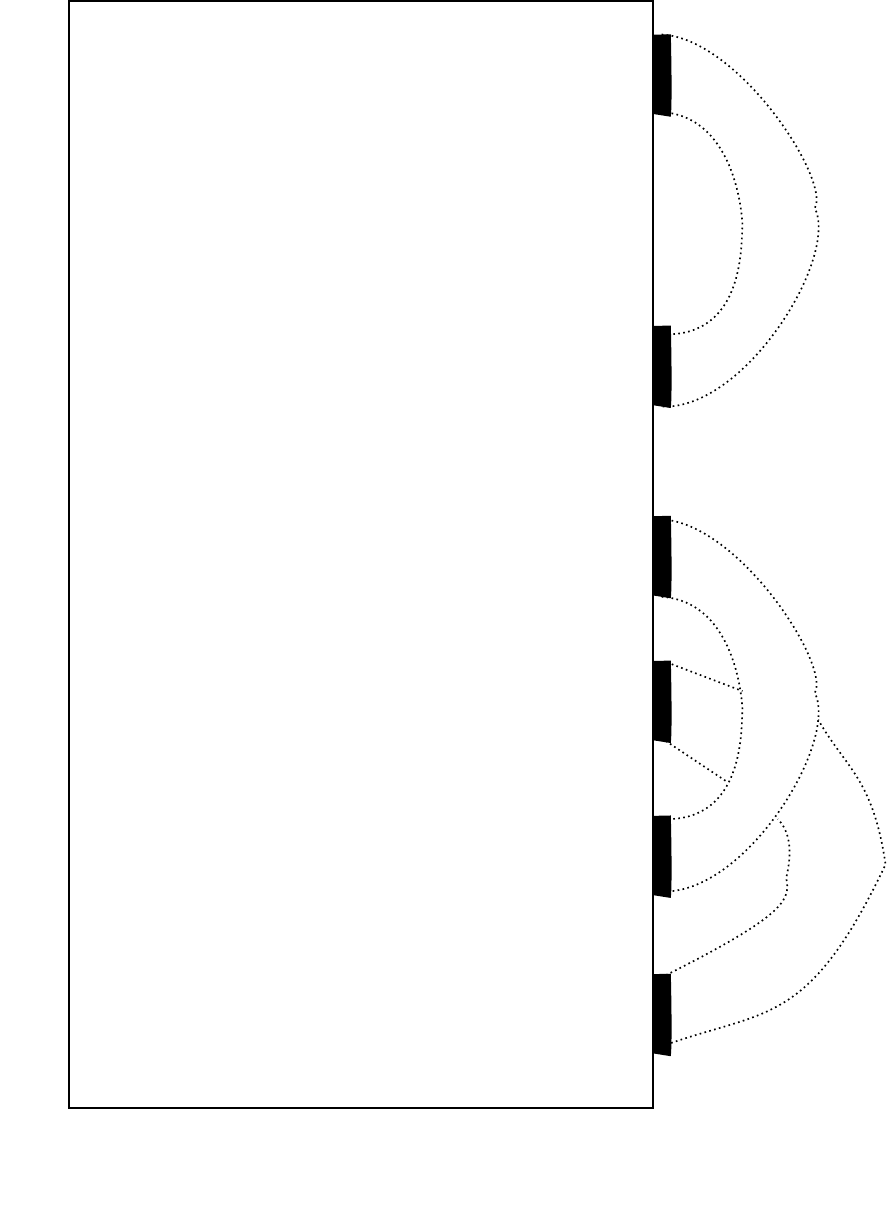}
\caption{A construction of a twice punctured genus one Riemann surface $P$.}
\label{annulitori}
\end{figure}

\subsection{Front diagrams}\label{frontdiagramsss}
Let $K$ be a Legendrian knot in $P\times \R$. The flow $\phi^t_L$ along the Liouville vector field $L$ (under its natural extension to $P\times \R$) after time $t$ preserves the Legendrian isotopy class of $K$. As $t$ tends to $-\infty$ any compact set tends to some small neighborhood of the union of the stable manifolds. In particular ,since the Liouville vector field $L$ is still attracting towards the line over what was originally the origin of the disk, used to model $P$, after a projection to $P$ the knot will consist of a part, called the \emph{central part}, lying in a small neighborhood $B(0,\epsilon)$ of the origin and several arcs going out through the open handles and then returning to the central part. The height difference in our new coordinates (also called the action) also tends to zero above the central part as shown by the following lemma.

\begin{lemma}\label{litenlada}
Given a Legendrian knot $K$ there exists some function $f(\epsilon)$ such that after flowing along the inverse Liouville flow long enough the part of $(K\cap B(0,\epsilon)\times \R)\subset B(0,\epsilon)\times I_\epsilon$, where the interval $I_\epsilon$ has length $f(\epsilon)$, furthermore $f(\epsilon)$ tends to $0$ as $\epsilon$ tends to $0$. 
\end{lemma}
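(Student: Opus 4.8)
The plan is to realise the inverse Liouville flow as a genuine contact isotopy of $P\times\R$ and to read off from it exactly how the $z$-coordinate is rescaled. Write $\alpha=dz-\theta$ as in Lemma \ref{preeb}, where $\theta=\iota_L\omega$ is the Liouville one-form and $\omega=d\theta$. First I would check that the contact lift of $L$ is the vector field $X=L+z\,\pa_z$ on $P\times\R$: by Cartan's formula the derivative of $\alpha$ along the flow of $X$ is $\iota_X d\alpha+d(\iota_X\alpha)$, and since $d\alpha=-\omega$ with $\omega$ pulled back from $P$ we have $\iota_X d\alpha=-\iota_L\omega=-\theta$, while $\iota_X\alpha=z-\iota_L\theta=z$ (because $\iota_L\theta=\iota_L\iota_L\omega=0$); this derivative therefore equals $-\theta+dz=\alpha$. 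Hence $(\Phi^t)^*\alpha=e^{t}\alpha$, so the flow $\Phi^t$ of $X$ preserves $\krn(\alpha)$ and carries Legendrians to Legendrians; it projects to $\phi^t_L$ on $P$ and, since its $z$-component solves $\dot z=z$, it acts by $\Phi^t(p,z)=(\phi^t_L(p),e^{t}z)$. (For the un-normalised $\theta$ of Subsection \ref{modelP} the same computation yields a positive constant rate $e^{ct}$ in place of $e^{t}$, which changes nothing below.) This $\Phi^t$ is precisely the Legendrian isotopy used in Subsection \ref{modelP}.

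With the explicit formula in hand the height estimate is immediate. Since $K$ is a compact Legendrian knot there is an $M$ with $|z|\le M$ on $K$, and therefore every point of the flowed knot $\Phi^t(K)$ satisfies $|z|\le e^{t}M$; in particular all of $\Phi^t(K)$, and a fortiori the part of it lying over any subset of $P$, is contained in the slab $P\times[-e^{t}M,e^{t}M]$. Thus the only remaining point is two-dimensional: I must determine how negative $t$ has to be before the central part of the projection $\phi^t_L(\pi(K))$ lies inside $B(0,\epsilon)$. Near the origin $L$ is the radial field, so there $\phi^t_L(p)=e^{t}p$ and, more generally, the basin of the index-$0$ zero is contracted onto the origin as $t\to-\infty$ while the finitely many arcs escaping through the handles are pushed out along the $1$-dimensional unstable manifolds of the index-$1$ zeros of Lemma \ref{pliouville}. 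Hence there is a time $t(\epsilon)$, with $t(\epsilon)\to-\infty$ as $\epsilon\to0$, after which the central part of the projection sits in $B(0,\epsilon)$; setting $f(\epsilon)=2e^{t(\epsilon)}M$ and $I_\epsilon=[-e^{t(\epsilon)}M,\,e^{t(\epsilon)}M]$, the part of $\Phi^{t(\epsilon)}(K)$ over $B(0,\epsilon)$ lies in $B(0,\epsilon)\times I_\epsilon$, and $f(\epsilon)\to0$ because $t(\epsilon)\to-\infty$.

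The step I expect to be the real obstacle is not this $z$-estimate but the bookkeeping hidden in the phrase ``the central part sits in $B(0,\epsilon)$''. I must check that, for $t$ sufficiently negative, the strands of $\Phi^t(K)$ whose projection meets $B(0,\epsilon)$ are exactly the contracting central strands, and that the handle-arcs --- whose heights are also bounded by $e^{t}M$, hence harmless in $z$, but which connect back to the centre --- do not re-enter $B(0,\epsilon)$ along uncontrolled sheets. This is a Morse-theoretic statement about the gradient-like flow of Lemma \ref{pliouville}: I would fix a neighbourhood of the saddle points together with their unstable manifolds, and then take $\epsilon$ smaller than the distance from the origin to this set, so that once $t$ is negative enough the only part of the knot projecting into $B(0,\epsilon)$ is the central blob, which by the computation above is confined to height $f(\epsilon)\to 0$.
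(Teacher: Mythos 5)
Your proof is correct, but it takes a genuinely different route from the paper's. The paper never writes down the lift of the Liouville flow explicitly; instead it works with the flowed knot as a static Legendrian object and does action bookkeeping on it: height differences between two points over $B(0,\epsilon)$ are decomposed into contributions from segments inside the central part, which are bounded by the area of $B(0,\epsilon)$ (integrating $dz=\theta$), and contributions from handle-arcs, which are controlled by a closing-up trick --- each handle-arc $A$ is completed by a Legendrian curve $C$ in $D\times\R$ to a closed Legendrian knot, so the height change along $A$ equals (up to sign) that along $C$, and after flowing, $C$ itself lies in the central part and is again bounded by the area argument. You instead identify the Legendrian isotopy as the contact flow of $X=L+cz\,\pa_z$ (the contact Hamiltonian $z$), read off that $z$-differences scale by $e^{ct}$, and conclude that the \emph{entire} knot, not just the part over $B(0,\epsilon)$, is flattened into an interval of length $2e^{ct}M$; this is stronger than the lemma and makes the choice of $f(\epsilon)$ essentially free. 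Your computation is robust to the normalization ambiguity you flag: any vector field on $P\times\R$ projecting to $L$ whose flow preserves Legendrians must have the form $L+(cz+k)\pa_z$, so the exponential contraction of height differences is forced no matter which lift the paper intends, and your final Morse-theoretic paragraph is actually not needed for the statement itself (the global $z$-bound applies to whatever sits over $B(0,\epsilon)$), only for knowing that the central part is what sits there. What the paper's longer argument buys in exchange is machinery that is reused immediately afterwards: the identification of height changes with integrals of the Liouville form and the closing-up of handle-arcs by central curves are exactly the tools used in the front-diagram theorem to compare relative heights of arcs passing through the same handle, and the paper's bound ties $f(\epsilon)$ quantitatively to the area of $B(0,\epsilon)$ rather than to an arbitrary choice of flow time $t(\epsilon)$.
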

\begin{proof}
It is easy to see that some function $f(\epsilon)$ satisfying the first part of the theorem exists due to compactness. Assume that we have two points $a,b$ in the central part of $K$ with $z$-coordinates differing by $d>>0$. Since $K$ is connected we can travel from $a$ to $b$ along $K$. The difference in height can come from two sources. It can either come from the central part or from arcs going through the open handles and back, since $K$ can be divided into arcs and the central part. Inside the central part, the height gained corresponds to the area covered in the Lagrangian projection (by integrating $dz=ydx$). However, the ball $B(\epsilon,0)$ has an area tending to zero as $\epsilon$ tends to zero. We visit the central part only finitely many times, leading to a contribution that tends to zero as epsilon tends to zero. We also travel along finitely many handle-arcs so it is enough to show that the height contribution from traveling along one such arc tends to zero. Let this handle-arc be denoted by $A$. The endpoints of $A$ must lie in the central part. We can easily find some Legendrian curve $C$ inside $D\times\R$ such that $C\cup A$ is a Legendrian knot (we simply close up the curve in some way). The height contribution from going along $A$ is then the same as the contribution from going along $C$. After flowing along the inverse Liouville vector field long enough, $C$ will be in the central part and so contribute a height difference bounded by the area of the projection of the central part to $P$ as above. 
\end{proof}

We call a finite collection of curves $C$ in the square, modelling the projecting out the $y$-coordinate in $D\times R$, for a \emph{prefront} if there exists a collection of Legendrian curves $C'$ in $D\times \R$ such that the front projection to the $zx$ plane is $C$. Note that any such collection of Legendrian curves is unique, since the $y$-coordinate of the curves can be regained from the front diagram by solving the equation $dz=ydx$. 

\begin{definition}
We call a prefront $C$ \emph{admissible} for every curve $\gamma\in C$ the endpoints of $\gamma$ lie on marked intervals. Furthermore we require that $\gamma$ is transversal to every marked interval. We also demand that for each associated (by the open handle) pair of intervals $I$ and $I'$  the same number of endpoints lie on both.
\end{definition}
\begin{remark}
Any generic prefront will be obeying the same rules as a standard front diagram in $\R^3$, i.e. no vertical tangents, and the only singularities are semi-cubical cusps and transverse self-intersections.
\end{remark}

Given an admissible prefront $F$ we identify the endpoints of the curves in $F$ in the following manner. For each pair of associated intervals $I$ and $I'$ the endpoints on $I$ are identified with the endpoints on $I'$ in the order which preserves the orientation of the boundary of the square. The curve $\gamma$ obtained in this manner is called the \emph{completed} prefront.

\begin{definition}
Given an admissible prefront $F$ with a completed prefront $\gamma$ we call $F$ a \emph{front diagram} if $\gamma$ is an self transverse immersion of a circle, except for some finite number of points where the image is a semi-cubical cusp. 
\end{definition}

\begin{definition}
Given a generic front diagram $F$ we call the knot diagram $\hat F$ the resolution of $F$ if $\hat F$ is obtained by replacing every crossing, every right and left cusp and every associated interval pair (where the pairing is by the handle) as depicted in Figure \ref{resolution}, noting the different resolutions depending on if such a interval is the upper or lower in the pair.
\end{definition}
\begin{remark}
Note that this resolution for front diagrams which are affine (i.e. never enter the open handles) is the resolution presented by Ng in \cite{NG1}. 
\end{remark}
\begin{figure}
\includegraphics[scale=0.5]{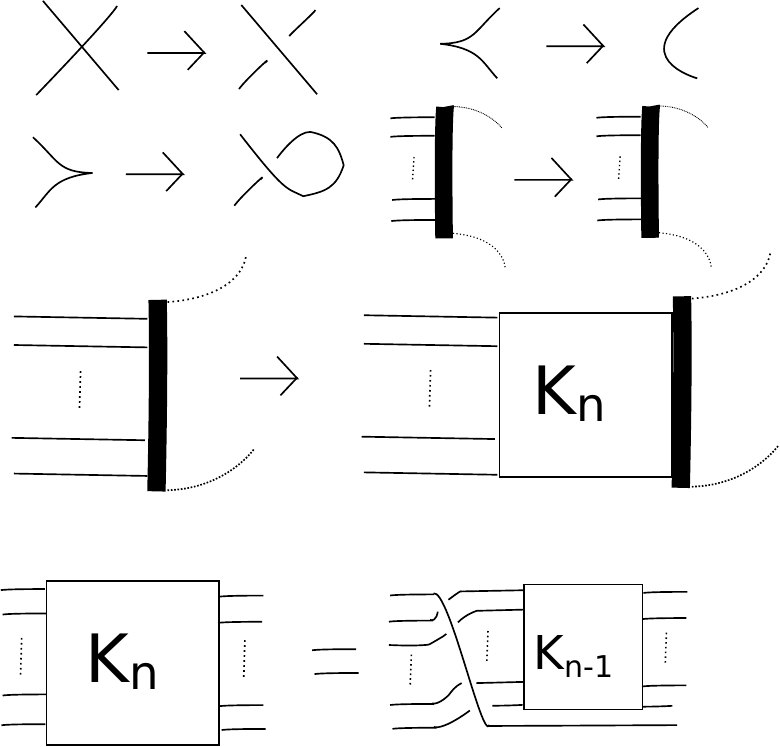}
\caption{The resolution of the front diagram, note the differing resolutions depending on upper and lower handle attachments.}
\label{resolution}
\end{figure} 

We say that two knot diagrams are knot diagram isotopic if there is a isotopy preserving the decorations on the crossings between them.

\begin{theorem}
Given a Legendrian knot $K\subset P\times\R$, it is possible to construct a front diagram $F$ for $K$ such that the resolution $\hat F$ of this diagram is knot diagram isotopic to the Lagrangian diagram of a knot $K'$ which is in the same Legendrian isotopy class as $K$. Furthermore, for any front diagram $F$ there exists a Legendrian knot $K$ such that the Lagrangian diagram $\pi(K)$ is knot diagram isotopic to the resolution of $F$. 
\end{theorem}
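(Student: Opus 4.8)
The plan is to reduce the statement to the classical front-to-Lagrangian resolution of Ng \cite{NG1} in the region away from the handles, and to verify by an explicit local analysis that the two new resolution templates of Figure \ref{resolution} correctly record the passage of the knot through a handle. \textbf{First direction.} Starting from $K$, I would flow along the inverse Liouville flow $\phi^t_L$ for sufficiently long, which by the discussion in Subsection \ref{modelP} is a Legendrian isotopy, to bring $K$ into the standard position of Lemma \ref{litenlada}: a central part contained in $B(0,\epsilon)\times I_\epsilon$ together with finitely many arcs running out through the handles and back. Projecting out the $y$-coordinate yields curves in the $zx$-plane, and after a generic $C^\infty$-small Legendrian perturbation (again a Legendrian isotopy) these have no vertical tangents and only semi-cubical cusps and transverse double points. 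Using the $1$-jet conditions near the marked intervals I would arrange that each arc meets the marked intervals transversally and that paired intervals carry equally many endpoints; the resulting prefront is then admissible and its completion is a self-transverse immersed circle with cusps, i.e.\ the desired front diagram $F$.

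Showing that $\hat F$ is knot-diagram isotopic to $\pi_P(K')$ splits into two regions. Over the central square, where the contact form is the standard $dz-y\,dx$, the $y$-coordinate of a point of $K'$ equals the slope $dz/dx$ of the front, so the relative heights at a crossing reproduce the over/under data of the Lagrangian double points; this is precisely Ng's resolution \cite{NG1}. Near each handle I would pass to the local model of the attachment from Lemma \ref{pliouville}, track the sign of the slope as an arc enters and leaves a marked interval, and check that reconnecting the arcs by the orientation-preserving identification produces exactly the local picture of Figure \ref{resolution}, the upper/lower distinction corresponding to the two sides of the identification.

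\textbf{Converse direction.} Given a front diagram $F$, the $y$-coordinate is recovered from the slope of the front by solving $dz=y\,dx$, which produces unique Legendrian arcs $C'$ in $D\times\R$. Admissibility guarantees that the endpoints on each associated pair of intervals agree in number and may be glued in the orientation-preserving order, while the $1$-jet conditions ensure that the glued curve is smooth and Legendrian across the handles; hence the completed prefront lifts to a single Legendrian knot $K$ in $P\times\R$. The identity relating $\pi_P(K)$ and $\hat F$ then follows from the same local computation as above, run in reverse.

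The main obstacle is the handle analysis. One must verify in explicit local coordinates that a Legendrian arc passing through a handle, together with the enforced $1$-jet matching and the orientation-preserving identification of the paired intervals, projects under $\pi_P$ to exactly the two resolution templates of Figure \ref{resolution}; this requires treating the upper and lower handles separately and checking that the $y$-coordinates agree across the identification so that no spurious crossings or discontinuities are introduced. Everything outside the handles reduces to Ng's theorem together with standard genericity arguments for the Legendrian perturbations.
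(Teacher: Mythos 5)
Your overall skeleton matches the paper's: flow along the inverse Liouville field into the standard position of Lemma \ref{litenlada}, use Ng's resolution over the central square, and treat the handles separately; your converse direction (solve $dz=y\,dx$ in the square and interpolate with chosen Legendrian arcs through the handles) is also essentially the paper's. The genuine gap is in your handle analysis, which you yourself flag as the main obstacle but then propose to settle by ``an explicit local analysis of \emph{a} Legendrian arc passing through a handle'' and by ``checking that no spurious crossings are introduced.'' This mis-identifies what must be proved. The handle templates of Figure \ref{resolution} are not a local statement about one arc: they encode crossings \emph{between different strands} passing through the \emph{same} handle, and the content of the proof is to show that these crossings must occur, that each pair of strands crosses exactly once, and to determine which strand passes over. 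A single-arc computation in local coordinates sees no crossing at all, so it can never recover the template; and the expectation that crossings would be ``spurious'' suggests the mechanism has been missed entirely.

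The paper's argument for this step is global and uses the flow quantitatively. After flowing, all strands through a given handle are pulled close to the unstable solution through the zero of the Liouville field, so the integral of the Liouville form along them---hence, by $dz=L'$, the $z$-height gained in the handle---is nearly the same for each strand; therefore the $z$-ordering of the strands is preserved from one end of the handle to the other. On the square side, the $z$-order corresponds (via slopes, i.e.\ via $dz=y\,dx$) to the $y$-order at the entry interval, while the orientation-preserving identification of the paired intervals reverses the positional order at the exit interval. Consequently each pair of strands through the same handle is forced to cross at least once in the Lagrangian diagram, and cannot cross more than once by the flow control; the over/under data at each such crossing is determined by the preserved $z$-order, since the height changes inside the handle are nearly equal. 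Pushing these crossings out of the handle by a knot-diagram isotopy is exactly what produces the two resolution templates, including the upper/lower distinction. Without this multi-strand ordering argument (which rests on Lemma \ref{litenlada} and the near-equality of the Liouville integrals, not on local coordinates), the first direction of the theorem does not go through.
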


\begin{proof}
Choose some small Legendrian perturbation, putting the knot in general position. We begin by flowing along the inverse Liouville vector field $L$ for a long time to put the knot in a standard position.
After this flow the knots will consist of three parts. One part $\hat{K}$ will be inside $B(0,\epsilon)\times\R$. Outside of this piece the knot will consists of a number of arcs (in the rest of the disc and in the handles) which are very close to the solutions of the Liouville flow passing through the zeroes in the handles. These arcs are easy to understand and most of the proof is devoted to examining how they interact with each other and with $\hat{K}$l. See Figure \ref{xyxz} for an example of the Langrangian diagram together with the front diagram for a Legendrian knot where $P$ is the punctured torus and where we have three arcs.

We take care of these pieces separately from the part inside $B(0,\epsilon)\times\R$. Since the restriction of the contact form to $B(0,\epsilon)\times\R$ gives the standard contact form $dz-ydx$ we simply construct the prefront for this pieces of the knot using the standard front projection taken from the classical $\R^3$ situation.\\

We now consider the arcs passing through the handles. Since the arcs were attracted to the solution of the Liouville flow, each arc in itself is very simple. It starts at $\hat{K}$ and travels outward from the disc to a handle, goes through the handle, and returns to $\hat{K}$ without any self intersections in the Lagrangian diagram (all such intersections are pushed to $\hat{K}$ by the inverse Liouville flow.
We wish to understand how these arcs interact with each other and how they enter the $\hat{K}$ part of the knot. Two arcs passing through different handles will not have any intersections in the Lagrangian diagram due to the flowing separating them. Take two arcs $A$ and $A'$ passing through the same handle. Since the Liouville flow pulled the knot towards the curve passing through the zero of the Liouville vector field in this handle, the two arcs will be very close in the Lagrangian projection. See Figure \ref{xyxz} for an example of the Lagrangian projection together with the front diagram. 
\begin{figure}
\includegraphics[scale=0.2]{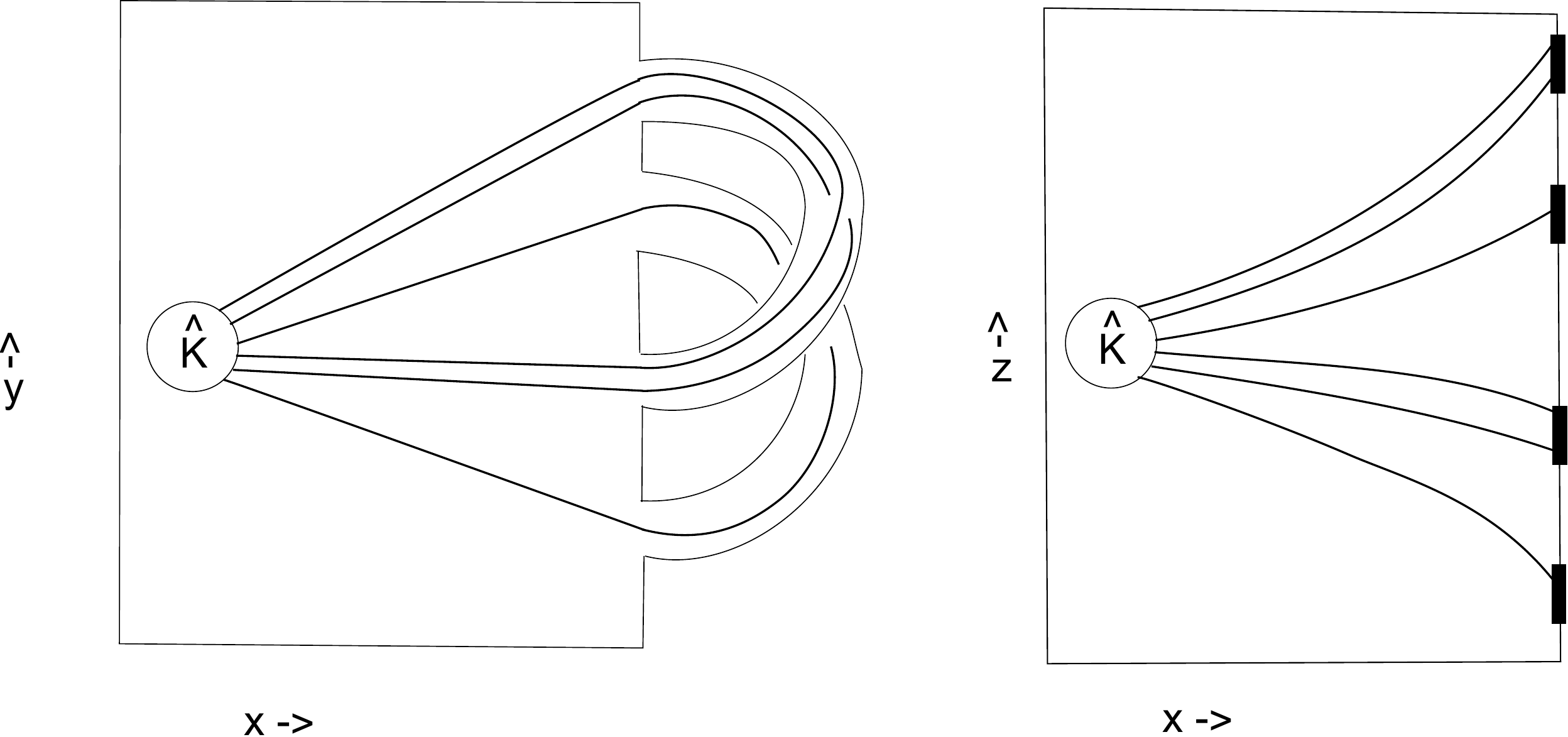}
\caption{A Lagrangian diagram together with the front diagram. Note that the handles are not included in the front diagram since we can only mark when arcs enter the handles, but not the handles themselves.}
\label{xyxz}
\end{figure} 

Since the arcs are very close, the value of the Liouville form integrated along the arcs will be very close. Since $dz=L'$ for Legendrian knots the height differences achieved by passing through the handle must be very close for the two arcs as well (and can be made arbitrarily close by flowing long enough) and hence the sign of the difference between their respective $z$-coordinates does not change when passing through the handle. Thus the $z$-order in which they enter $\hat{K}$ does not change either.
In the Lagrangian projection to $D$ inside the square model of $P$, outside of the central part $B(0,\epsilon)$ arcs will be close to straight lines going from the intervals denoting the open handle attachment to $B(0,\epsilon)$. We can assume that these straight lines do not intersect (by first choosing a small $\epsilon$ making any eventual intersections very close to $B(0,\epsilon)$ and then choosing a slightly larger $\epsilon$ absorbing any intersections. 
Since they do not intersect and go from the intervals denoting the open handle attachments to the center and the open handle attachments all have differing $y$-coordinates, the arcs will be ordered by the $y-$coordinate of their entry to the intervals. Furthermore, this ordering will be preserved for any $x>\epsilon$, and arcs entering the same interval will have $y-$coordinates very close to each other. 
In the $xz$ projection in $D\times\R$, the ordering of the $y$-coordinates will correspond to a similar ordering of the slopes of the arcs by solving the equation $dz=ydx$, with higher $y-$coordinate corresponding to a higher slope. By Lemma \ref{litenlada}, the arcs start inside some small cube above the center of $D$. Since the arcs travel outwards from $B(0,\epsilon)\times I_\epsilon$ with ordered slopes, by making $I_\epsilon$ of small enough length, the slope order will correspond to the $z$-coordinate order when entering the handles. This justifies drawing intervals at the appropriate $z-$coordinates corresponding to the handles which the arcs are passing through. Since the height difference between two arcs passing through the same handle did not change significantly, the order of passing into a handle, ordered by $z$-coordinate and going out the other end of the handle, ordered by $z-$coordinate, must be preserved. 
Going back through the equivalent orderings, also the $y$-coordinate orderings must be preserved. Thus, two arcs passing through the same handle must cross at least once in the Lagrangian diagram, they cannot cross more than once due to the flow. Since we know their relative heights outside of the handle, and since any height-changes inside the handles are very close (being the integration of the Liouville form), we know which arc passes above the other. Since we are only interested in the Lagrangian diagram up to knot diagram isotopies, we can move out these crossings outside the handle. This corresponds to the resolution of the handles. We know that we have no crossings in the Lagrangian diagram outside of the handles and $B(0,\epsilon)$. Inside $B(0,\epsilon)$ we can use Ng's \cite{NG1} techniques to retrieve the Lagrangian diagram.  This gives a complete picture of the Lagrangian diagram up to knot diagram isotopy.\\ 

Given a front diagram $F$ we construct an associated Legendrian knot as follows. Choose some Legendrian arcs passing through the handles where the curve in the front diagram enters. After flowing along the inverse Liouville flow for long enough time, these arcs will stay Legendrian and their endpoints will stick into $D\times\R$. In $D\times \R$ we can attach the pieces of the diagram entering the handles to the arcs lifted to the front diagram in $D\times\R$. Inside the square we can simply solve the equation $dz=ydx$ to get $y-$coordinates making the result Legendrian. Outside of the square we will have our (near identical) arcs which we knew were Legendrian. The front diagram for this Legendrian knot is then $F$.
\end{proof}
\begin{remark}
Since the resolution of a Legendrian knot $K$ discovers every crossing in the Lagrangian diagram, it discovers every Reeb chord and can thus be used to calculate the DGA. Furthermore, since any front diagram corresponds to a Legendrian knot, we have a method for constructing examples of Legendrian knots in $P\times \R$.
\end{remark}
\begin{definition}
We say that two front diagrams are \emph{equivalent} if there exists a sequence of moves taking one of them to the other. The allowed moves are depicted in Figure \ref{frontmoves23}. 
\end{definition}

\begin{figure}
\includegraphics[scale=0.5]{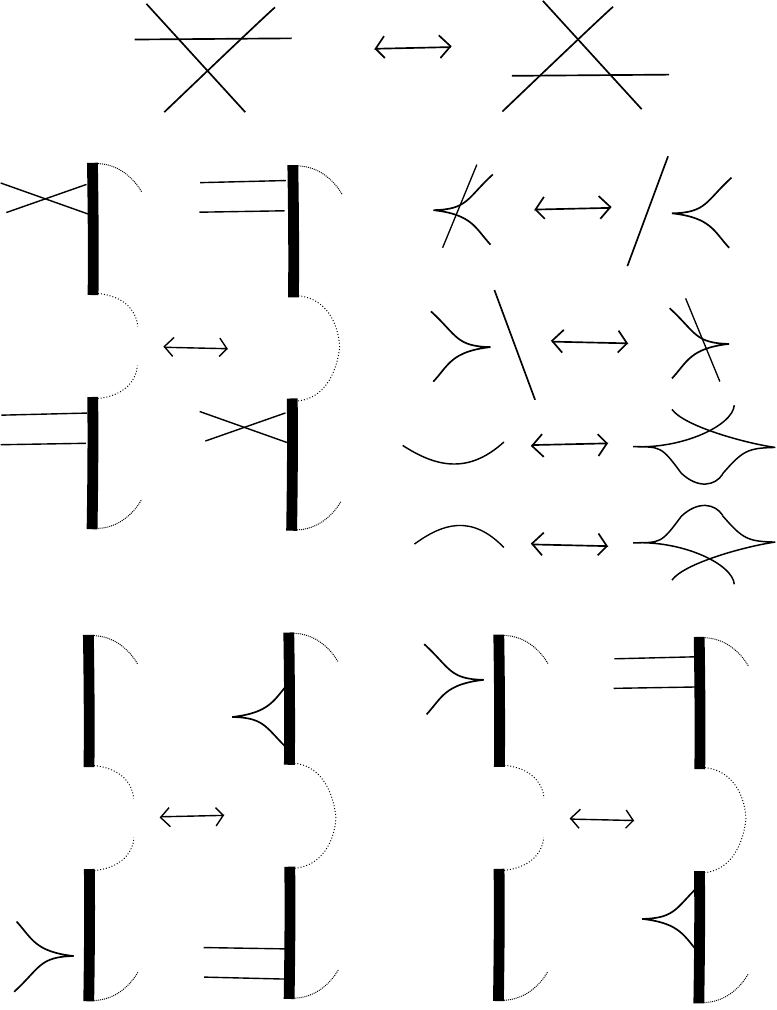}
\caption{Moves defining front diagram equivalence.}
\label{frontmoves23}
\end{figure}

\begin{theorem}\label{frontthe}
If two Legendrian knots $K$ and $K'$ are Legendrian isotopic the front diagrams associated to $K$ and $K'$ will be equivalent.
\end{theorem}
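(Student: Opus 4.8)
The plan is to run the standard Reidemeister-theorem argument adapted to the square model: realize a Legendrian isotopy as a generic one-parameter family of Legendrian knots, take its front projection, and show that the front diagram changes only by the five moves of Figure \ref{frontmoves2}, which I claim correspond exactly to the codimension-one degeneracies a generic path can meet.

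First I would normalize the whole isotopy at once. Given a Legendrian isotopy $K_t$, $t\in[0,1]$, with $K_0=K$ and $K_1=K'$, I apply the inverse Liouville flow $\phi^{-s}_L$ simultaneously to the entire family. Since $\phi^s_L$ is a Legendrian isotopy (as recorded in Subsection \ref{modelP}) this changes no Legendrian isotopy class, and by compactness of $[0,1]$ a single flow time $s$ and a single radius $\epsilon$ can be chosen so that every $K_t$ is in the normal form of Lemma \ref{litenlada} and the construction theorem: a central part lying over $B(0,\epsilon)$ together with disjoint handle-arcs whose entries into each marked interval are ordered by $z$-coordinate, with the order preserved across the handle. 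This puts the whole family into the world of admissible prefronts, continuously in $t$.

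Next I would take the front projection of the family and put the path into general position relative to the discriminant, the set of parameter values for which the projected curve fails to be a front diagram. For all but finitely many $t$ the projection is a genuine front diagram, and at the finitely many exceptional times exactly one codimension-one degeneracy occurs. The core of the proof is the enumeration of these degeneracies. Over the interior of the square they are precisely the classical front degeneracies --- a self-tangency of two strands, a triple point, and a cusp birth/death (equivalently a strand passing a cusp) --- which are absorbed by the three classical moves. The genuinely new phenomena occur at the marked intervals and are governed by the handle structure: a cusp passing through an identified interval pair, and a crossing (double point) passing through such a pair; these are exactly the two additional moves in Figure \ref{frontmoves2}. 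Here the $z$-ordering of the handle-arc endpoints preserved by the normalization is what guarantees that such a passage is controlled by a single handle move rather than by an uncontrolled reshuffling of endpoints. I would also verify that between consecutive exceptional times the front diagram is constant up to planar isotopy respecting the interval identifications, so that crossing a single discriminant wall is realized by a single allowed move.

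The hard part will be the careful local analysis near the handles: verifying that the $1$-jet conditions and the endpoint-ordering built into the definition of an admissible prefront are preserved along a generic path, so that the only codimension-one events compatible with the handle structure are the two handle moves and no unlisted move is forced. A secondary technical point is confirming that the simultaneous normalization really can be carried out smoothly in $t$ while keeping each slice admissible. Once these are in hand, concatenating the moves at the finitely many exceptional times exhibits the required sequence taking $F$ to $F'$, and applying the same argument to a constant isotopy shows in passing that the front diagram of a single knot is itself well defined up to equivalence.
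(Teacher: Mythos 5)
Your proposal is correct and follows essentially the same route as the paper: apply the inverse Liouville flow to the whole isotopy, reduce to a generic one-parameter family, absorb the interior degeneracies by the three classical front moves, and account for the finitely many handle events by the two extra moves of Figure \ref{frontmoves2}. The only cosmetic difference is that you identify the handle events as discriminant walls of the front projection (a cusp or a crossing passing a marked interval pair), while the paper identifies the same events as failures of general position with respect to the unstable solution through the Liouville zeros in the handles; these are the same occurrences viewed in the square model versus in the Liouville dynamics.
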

\begin{proof}
Let $f(t)$ be the Legendrian isotopy taking $K$ to $K'$. We can act on the entire isotopy by flowing along the inverse Liouville vector field. As long as all the knots are in general position with regards to the handle-part of the unstable solution passing through the zeros of the Liouville flow in the handles the entire isotopy will be a Legendrian isotopy of curves in $D\times\R$ and can thus be reduced to a sequence of classical front diagram moves (those not involving handles in Figure \ref{frontmoves23}). If the knot ever is in a non-general position with regards to the unstable solution, we either have a tangency to the unstable solution or a double point (that is, we have a loop or a crossing passing through a handle) in the generic case. Since the isotopy can be assumed to be generic, there are only finitely many such occurrences. In each case, we examine the diagram just before the collision with the unstable solution, flow against the Liouville vector field, and compare it to the picture after the collision, again using the inverse Liouville flow. These situations correspond to the moves involving the handles in Figure \ref{frontmoves23}

\end{proof}

\section{Examples}\label{exsection}
In this section we consider some examples of Legendrian knots. Any affine Legendrian knot in $\R^3$ is naturally included as a Legendrian knot in $P\times \R$. We calculate the DGA for several non-affine Legendrian knots and show that Legendrian knots which are formally Legendrian isotopic but not Legendrian isotopic exists in every homology class in $P$. We will use Chekanovs knots to construct non-affine knots which need the DGA to distinguish them.
An interesting collection of knots realizing every nontrivial homology class in $P\times \R$ is given by the following lemma.
\begin{lemma}\label{KH}
For each nontrivial homology class $h\in H_1(P\times \R)$ there exists a Legendrian knot $K_h\subset P\times\R$ such that $K_h$ realizes the homology class $h$ and such that every polygon immersed as in Subsection \ref{defdifferential} has at least two positive punctures.
\end{lemma}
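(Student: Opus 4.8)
The plan is to construct $K_h$ explicitly as the resolution of a front diagram in the square model of $P$, and then to rule out polygons with a single positive corner by a winding (Stokes) argument. First I would fix the homological data. Since $H_1(P\times\R)\cong H_1(P)$ is free abelian and $h\ne 0$, I choose a class $\eta\in H^1(P;\R)$ with $\la\eta,h\ra>0$ and represent it by a closed $1$-form $\beta$. The goal of the construction is a Legendrian representative $K_h$ of $h$ whose Lagrangian projection is \emph{monotone} for $\beta$, meaning $\beta(\dot{c})>0$ at every point of the oriented immersed curve $c=\pi(K_h)$, including at both branches through each double point. Using the square model of Subsection \ref{modelP}, I route the knot through the handles coherently with $h$ (all passages through a given handle in the same direction, with net count dictated by $\eta$) and arrange the central arcs to run monotonically in the $\beta$-direction as well; I then let $K_h$ be the knot whose Lagrangian diagram is the resolution $\hat F$ of this front. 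By construction every crossing of $\hat F$ is either a handle crossing (Figure \ref{resolution}) or a central crossing, and at each crossing both strands are traversed along the orientation of $K_h$ with $\beta>0$.

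Recall that an element of $F_i$ has, by definition, exactly one positive corner (the distinguished $\tau_0$); so the assertion is equivalent to showing that $K_h$ admits no orientation-preserving immersed polygon with precisely one positive corner, and I will prove the stronger statement that \emph{every} such immersed polygon, with the corner signs of Figure \ref{markedsigns2}, has at least two positive corners. The heart of the matter is a winding obstruction. Let $f\colon P_i\to P$ be any orientation-preserving immersion with locally convex corners at double points of $c$, making no assumption about its corner signs. Since $f$ is a map of a disk and $\beta$ is closed, Stokes' theorem gives
$$\int_{\pa P_i}f^*\beta=\int_{P_i}f^*d\beta=0.$$
Decomposing $\pa P_i$ into the boundary arcs between consecutive corners, each arc maps to a sub-arc of $c$ traversed either along or against the orientation of $K_h$; by monotonicity its contribution to $\int_{\pa P_i}f^*\beta$ is strictly positive in the first case and strictly negative in the second. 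As the total vanishes and a nonconstant polygon has at least one arc of positive length, the boundary must contain both forward and backward arcs. (In particular a teardrop $P_0$, whose single boundary arc is a loop traversed monotonically, is impossible, since such a loop would have strictly positive $\beta$-integral yet bound a disk.) Hence the traversal direction switches at least twice around $\pa P_i$, so $f$ has at least two \emph{turning corners}.

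It remains to identify turning corners with positive corners, and this is the step I expect to be the main obstacle. A local computation at a transverse double point of two strands with orientations $\vec a,\vec b$ (using the conventions of Figure \ref{markedsigns2}) shows that the two quadrants at which the boundary orientation switches between forward and backward traversal are exactly the quadrant containing the direction $\vec a+\vec b$ and the opposite quadrant; whether such a corner is positive or negative is then governed by which of the two branches is the upper one in the Legendrian lift. The delicate point is that this over/under datum is not free but is dictated by the $z$-lift of $K_h$, so one must verify that the explicit resolution of Figure \ref{resolution} at the handle crossings, together with the monotone central arcs at central crossings, always places the upper branch so that both turning corners acquire the positive sign. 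I would carry this out as a finite case check against the resolution picture, keeping track of the $z$-order produced by integrating $\theta$ along the arcs through each handle.

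Granting this sign check, every immersed polygon for $K_h$ has at least two positive corners, hence cannot satisfy the single-positive-corner condition defining $F_i$; so $F_i=\emptyset$ for all $i$ and the lemma follows. As a consistency remark I would note that at least one positive corner is in any case forced independently by positivity of the symplectic area: the lift of $\pa P_i$ to $K_h$ closes up, so $\sum_{\mathrm{pos}}\ell-\sum_{\mathrm{neg}}\ell=\area(f)>0$ where $\ell$ denotes Reeb chord length, which is the degenerate, $\beta$-free shadow of the winding bound. The essential new input beyond this classical positivity is precisely the monotonicity afforded by $h\ne 0$, which is what upgrades ``at least one'' to ``at least two'' and explains why no null-homologous knot can meet the hypothesis.
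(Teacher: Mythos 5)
Your Stokes/winding idea is elegant and genuinely different from the paper's proof (the paper builds explicit curves $A$, $B$, $C$ joined by inductively defined horizontal braids $T_n$, and argues combinatorially that any immersed polygon must either run through an entire braid or leave it at a positive corner, forcing at least two positive punctures; no homological or winding argument appears there). Your first step is correct: for a $\beta$-monotone diagram, $\int_{\partial P_i}f^*\beta=0$ forces the boundary traversal to switch direction at least twice, and your local identification of the turning quadrants with those containing $\pm(\vec a+\vec b)$ checks out. The genuine gap is exactly at the step you flag and then assume: the identification of turning corners with \emph{positive} corners is not a routine verification, because it is a condition on the diagram that can genuinely fail, and you have no explicit diagram against which to check it. At a crossing, the turning pair of quadrants depends only on the two strand orientations, while the positive pair depends only on which branch is upper; swapping the over/under data exchanges positive and negative quadrants but leaves the turning quadrants unchanged. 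So at each crossing "turning $=$ positive" holds for exactly one of the two possible $z$-orderings, and this ordering is not yours to choose: the Legendrian lift of an exact diagram is unique up to translation, so the upper branch is dictated by the sign of $\int_\gamma\theta$, where $\gamma$ is the loop of $K_h$ running from one visit of the double point to the other. Monotonicity with respect to the closed form $\beta$ places no constraint on the sign of this $\theta$-integral. Concretely, on the annulus $T^*S^1$ with $\theta=r\,d\phi$ and $\beta=d\phi$, consider monotone exact curves $c(t)=(2t,f(t))$ winding twice around: setting $h(t)=\int_t^{t+\pi}f\,ds$, exactness gives $h(t+\pi)=-h(t)$, double points are critical points of $h$, and a crossing has its turning corners \emph{negative} precisely when $h$ has a local minimum at a positive value (or a local maximum at a negative value) — which is easy to arrange with a few Fourier modes. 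Hence "monotone $+$ exact" alone does not imply your sign claim, and a polygon through such a bad crossing escapes your argument entirely.

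This gap is plausibly fillable — the basic doubly winding curve $(2t,\sin t)$ has its unique crossing good, and one could hope to assemble $K_h$ from such pieces — but filling it requires precisely what is missing: a concrete front diagram, its resolution, and the bookkeeping of the $z$-orders (equivalently the $\theta$-integrals through the handles and the central part) at every crossing, verifying that all crossings are good. Until that is done, the load-bearing step of your proof is an assumption, whereas in the paper the corresponding work is carried by the explicit structure of the braids $T_n$ and the handle resolution of Figure \ref{resolution}. A smaller point you should also make explicit: your front must have no right cusps, since the resolution of a right cusp is a small loop, and no loop is $\beta$-monotone for a closed $\beta$; the paper's curves are indeed constructed without right-hand cusps, so this is consistent, but it is a constraint on your construction, not a consequence of it.
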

\begin{proof}
Let $P$ be a Riemann surface with $p+1$ punctures and genus $g$. We model $P$ with our standard square model, where we have the open handles arising from attaching a punctured torus has positive $y$-coordinates and the open handles arising from attaching annuli with negative $y$-coordinate. Then $H_1(P\times \R)=\Z^p\times(\Z\times\Z)^g$. We construct the Legendrian knot $K_h$ by constructing Legendrian knots $K_{a,b}$ for the genus contributing pieces and Legendrian knots $K_c$ for the puncture contributing pieces such that they admit no immersed polygons with less than two positive corners realizing the corresponding homology classes $(a,b)$ respectively $c$. We begin with the case $(a,b)$. 
Let $k_1,k_2,\tau$ be an integer solution to the vector-equation $k_1(1,1)+k_2(1,-1)+\tau(1,0)=(a,b)$ such that $\tau$ is either $0$ or $1$. Take $k_1$ copies of curve $A$, $k_2$ copies of curve $B$ and $\tau$ copies of curve $C$ where the curves are as depicted in Figure \ref{ABC}. Taking a negative number of curves is interpreted as switching orientation of the curve. Note that the curves do not intersect either copies of themselves or each other (except for $C$ which could intersect itself, hence we only use one copy of it) before the resolution.
\begin{figure}
\includegraphics[scale=0.2]{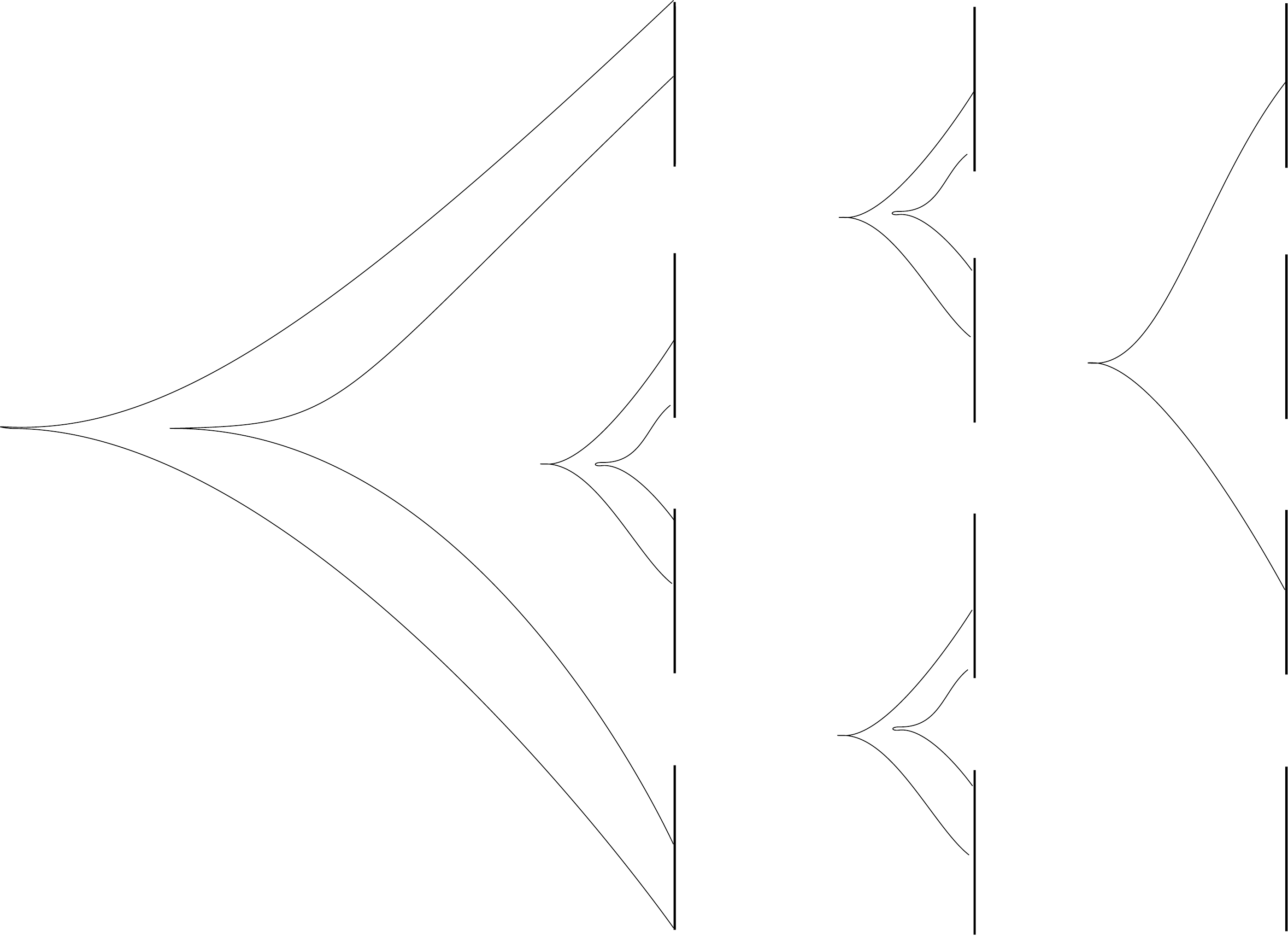}
\caption{Two copies of curve A, two copies of curve B and one copy of curve C.}
\label{ABC}
\end{figure}
Note that this collection of curves do not cross each other and have no righthand cusps. To make just one knot out of all these components we attach ''horizontal braids`` $T_n$ inductively defined as in Figure \ref{TN} to the top interval for the A-curves and the B-curves (the C-curve is already one component). The A,B and C parts are then connected together by adding replacing parallel horizontal pieces with one part from two different connected components with $T_2$ braids where the orientations agree. We call the resulting knot $K_{a,b}$.
In the case of a puncture encapsulating open handle with homology class $c$, we construct the knot directly as seen in Figure \ref{knotc}, with $\lfloor c/2\rfloor$ denoting the integer part of $c/2$. We can assume that $c>0$, since in the case of $c<0$ we follow the construction for $-c$ and switch orientation.

\begin{figure}
\includegraphics[scale=0.7]{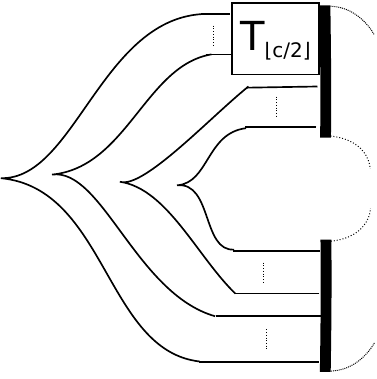}
\caption{The front diagram realizing $c$, there are $c$ strands passing through the handle.}
\label{knotc}
\end{figure}

After resolving these diagrams we will get a number of crossings from the $T_n$'s and some crossings from the handle resolution.
However, no immersed polygon has less than two positive punctures, thus the DGA is just the free algebra of the generators $a_i$ with $da_i=0$. This can be seen by examining the horizontal braids from which crossings originate, both coming from the handle resolution and the $T_n$ constructions. In any such horizontal braid it is easy to see that any immersed polygon is forced to either extends through the entire braid (perhaps picking up some negative corners) or start at a positive corner and extends out from one side. Since the horizontal braids are connected to each other, any attempt to immerse a polygon can only start and end at a positive corner, any immersed polygon must have at least two positive punctures.
Recall that we could choose the trivialization along the handles. This allows us to choose a trivialization giving us an arbitrary high Maslov index for this knot.
To construct the knot $K_h$ with homology $(c_1,c_2,...,c_p,a_1,b_1,a_2,b_2,...,a_g,b_g)$ we simply take the knots $K_{c_1},..K_{c_p},K_{a_1,b_1},...,K_{a_g,b_g}$ and glue them together as shown in Figure \ref{glue}, depending on how the orientations agree. In the case of one of the terms being zero, we just ignore this knot. Since we assumed that the homology was nontrivial, we will end up with at least one piece. These gluings will obviously not allow any more polygons with less than two positive punctures, and so the resulting knot $K_h$ will fulfill the demands in the theorem.
\begin{figure}
\includegraphics[scale=0.3]{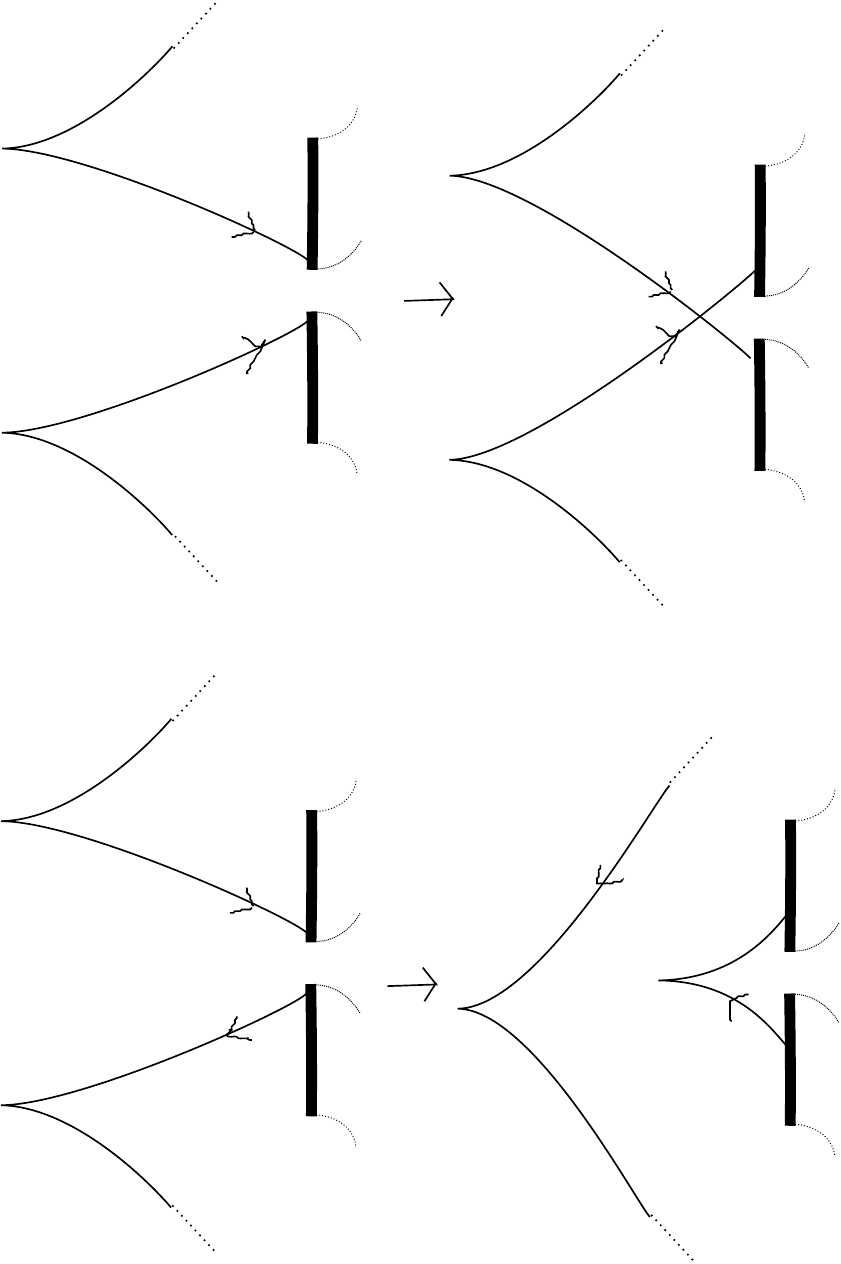}
\caption{We glue together different knots as depicted, dependent on the orientations.}
\label{glue}
\end{figure}

\begin{figure}
\includegraphics[scale=0.3]{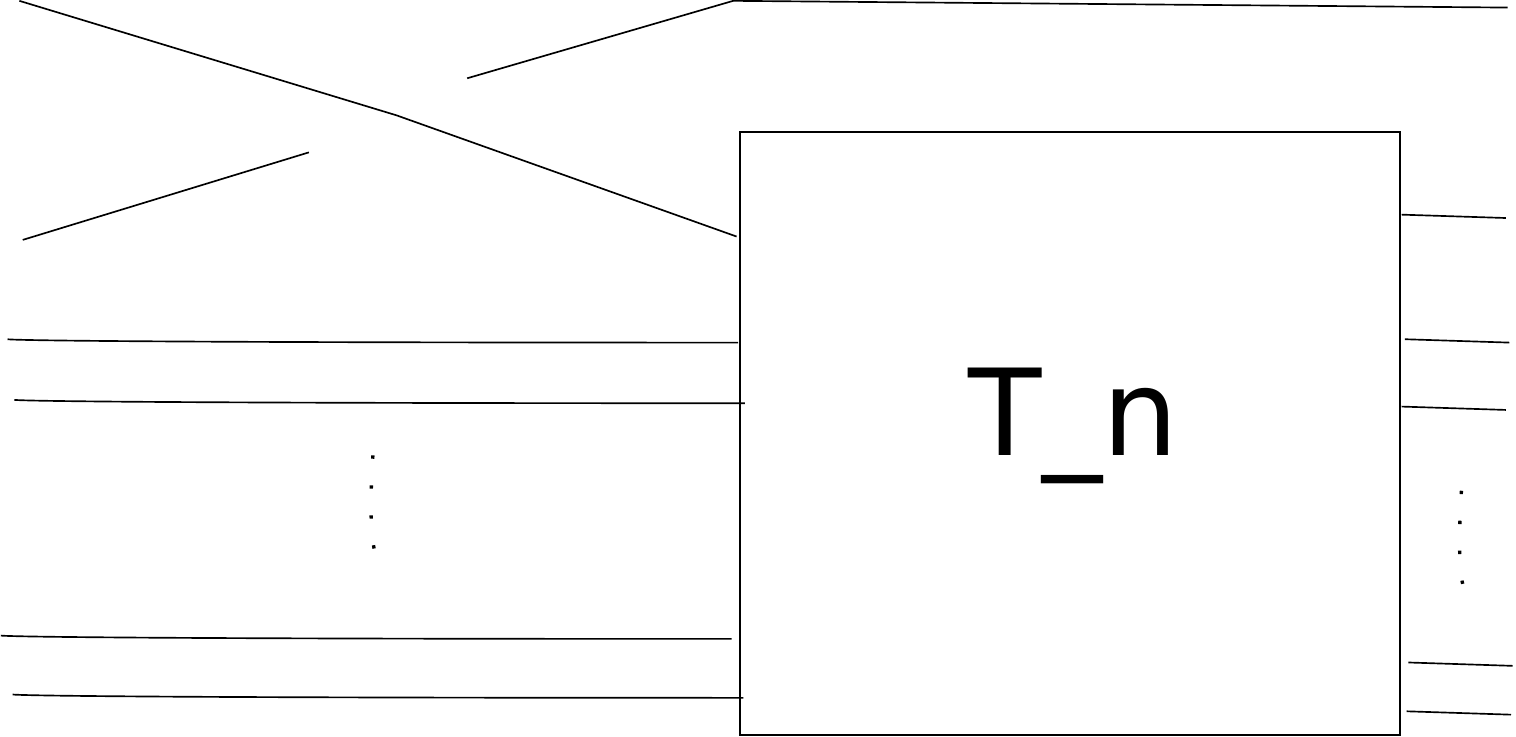}
\caption{We construct $T_{n+1}$ on $n+1$ strands using $T_n$}
\label{TN}
\end{figure}

\end{proof}

\begin{remark}\label{polygonremark}
Using results of Bourgeois, Ekholm and Eliashberg \cite{beeeffect}\cite{beesymp} one can show that every Legendrian knot in $\R^3$ has some immersed polygon in its Lagrangian diagram with exactly one positive puncture [Ekholm, personal communication].
\end{remark}

This collection of knots enables us to relatively easily attach other knots to it, without destroying too much of the homology. This gives rise to the following theorems.

\begin{theorem}\label{homologyknots}
For each homology class $h$ in $H_1(P\times R)$, there exists 2 Legendrian knots $K$ and $K'$ which cannot be distinguished by the classical invariants yet have different contact homology and so are not Legendrian simple.
\end{theorem}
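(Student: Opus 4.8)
The plan is to combine Chekanov's classical examples with the knots $K_h$ supplied by Lemma \ref{KH}. Recall that Chekanov \cite{CHEK} produced two Legendrian knots $C_1,C_2\subset\R^3$ which are smoothly isotopic and share all classical invariants (Thurston--Bennequin and rotation numbers) but whose DGAs are not stably tame isomorphic, the obstruction being their distinct sets of linearized Chekanov--Poincar\'e polynomials. Since any affine Legendrian knot in $\R^3$ embeds as a null-homologous Legendrian knot in $D\times\R\subset P\times\R$, the class $h=0$ is handled at once by taking $K=C_1$ and $K'=C_2$ directly. The work is therefore concentrated in the case $h\ne 0$.

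For $h\ne 0$, first I would take the knot $K_h$ of Lemma \ref{KH}: it realizes $h$ and, because every immersed polygon on its diagram carries at least two positive punctures, its DGA is the free algebra on its generators with identically vanishing differential. I would then form $K=K_h\# C_1$ and $K'=K_h\# C_2$ by performing a Legendrian connect sum in the central part of the square model, i.e. inside $D\times\R$, along a single strand chosen away from the handles and away from all crossings of $K_h$. Connect-summing with a null-homologous affine knot leaves the homology class unchanged, so both $K$ and $K'$ still represent $h$. Moreover, since $C_1$ and $C_2$ are formally Legendrian isotopic and the connect sum contributes the same smooth type, rotation data, and Maslov/framing term to each, the knots $K$ and $K'$ are formally Legendrian isotopic and hence cannot be told apart by the classical invariants.

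The crux is to show that the contact homologies of $K$ and $K'$ differ. Here the special property of $K_h$ is essential: because the gluing strand is far from every crossing and $K_h$ admits no polygon with a single positive puncture, the rigid polygons contributing to the differential split into those supported in the $C_i$-region, which reproduce exactly the Chekanov differential, and those meeting the $K_h$-region, which still carry at least two positive punctures and so never contribute to the differential of a single generator. Consequently the DGA of $K_h\# C_i$ is, up to stable tame isomorphism, the DGA of $C_i$ with a collection of extra free generators of vanishing differential adjoined. These extra generators lie in $\ker d_1$ and contribute the same monomials to every linearized Chekanov--Poincar\'e polynomial for both $K$ and $K'$, so the two polynomial sets differ exactly by the (nonempty) difference between the Chekanov sets of $C_1$ and $C_2$. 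Since this set is a stable-tame-isomorphism invariant (Subsection \ref{linearizedch}), $K$ and $K'$ have different contact homology, are not Legendrian isotopic, and the class $h$ is not Legendrian simple.

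The main obstacle I anticipate is making this ``splitting of polygons'' rigorous, i.e. establishing the connect-sum behaviour of the DGA. One must verify that no new rigid disc created near the gluing region can mix the two pieces so as to alter the Chekanov part of the differential or to induce a stable tame isomorphism between the two combined algebras; this amounts to a local analysis of the immersed polygons near the connect-sum strand, exploiting that the strand is a single embedded arc disjoint from all crossings, so that any polygon crossing it inherits the two-positive-puncture constraint from the $K_h$ side. The accompanying algebraic point is to control the augmentations of the combined algebra, showing that they restrict compatibly to each factor, which is what legitimizes the comparison of the linearized homologies.
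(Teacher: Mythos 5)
Your overall strategy coincides with the paper's: handle $h=0$ with Chekanov's pair directly, and for $h\ne 0$ attach Chekanov's knots to the knot $K_h$ of Lemma \ref{KH}, then distinguish the results by their linearized Chekanov--Poincar\'e polynomials, using the two-positive-puncture property of $K_h$ to keep the $K_h$-part of the algebra inert. However, the step you yourself flag as ``the main obstacle'' is precisely the crux, and it is left unproven; moreover, as stated it is doubtful. Your claim is that the DGA of $K_h\# C_i$ splits as the Chekanov DGA with free $d=0$ generators adjoined, because any polygon crossing the gluing region ``inherits the two-positive-puncture constraint from the $K_h$ side.'' But the property established in Lemma \ref{KH} concerns immersed polygons whose boundary lies entirely on the diagram of $K_h$; a mixed polygon that enters the $K_h$-region through the connecting band is, on that side, a polygon with extra boundary escaping through the band, and the lemma says nothing about such configurations. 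The paper's own computation shows that mixing genuinely occurs for its attachment: the attachment crossing $\gamma$ appears in the differentials of the Chekanov generators (e.g.\ $d(\alpha_1)=1+\alpha_7\gamma+\alpha_5\alpha_6\alpha_7\gamma$ and $d(\alpha'_2)=1+\alpha'_9+\alpha'_9\alpha'_6\alpha'_5\gamma'$), so the combined DGA is \emph{not} simply the Chekanov DGA plus free generators. The paper sidesteps any general connect-sum formula by writing down the full differential of the explicit combined diagram, determining all augmentations ($c_7=c_8=c_9=g_1=1$, with the extra constraint $c'_5c'_6=0$ for $K'$), and computing the linearized homologies, which yield $\lambda^2+\lambda^{-2}+p(\lambda)$ versus $2+p(\lambda)$. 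To complete your proof you would have to carry out an analogous local analysis of disks near your crossing-free band, or an explicit computation; neither is supplied.

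A second, smaller omission: in $P\times\R$ with $h\ne 0$ the algebra is graded only modulo the Maslov number $M(K)$, so the inequality $\lambda^2+\lambda^{-2}+p(\lambda)\ne 2+p(\lambda)$ of graded data requires the degrees $-2,0,2$ to remain distinct after reduction mod $M(K)$. The paper handles this via Remark \ref{stormaslov}: the trivialization along the handles is chosen so that $M(K)$ is arbitrarily large, and it is checked that this choice can change only the degrees of the $\beta$-generators (which contribute the same $p(\lambda)$ to both knots), not those of the $\alpha$'s and $\gamma$. Your argument needs this point as well, since without it the two polynomial sets could in principle coincide after the mod-$M(K)$ identification.
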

\begin{proof}
In the trivial case we can just take Chekanovs knots, presented in \cite{CHEK} and use an affine diagram, since the calculation of the contact homology coincides with the classical calculation, they are not isotopic. In the nontrivial case we let the sought after homology class be $(a,b)$.
We attach the Chekanov knots $L$, $L'$ which are formally Legendrian isotopic to $K_{a,b}$ as depicted in figure \ref{lkab}.
\begin{figure}
\includegraphics[scale=0.19]{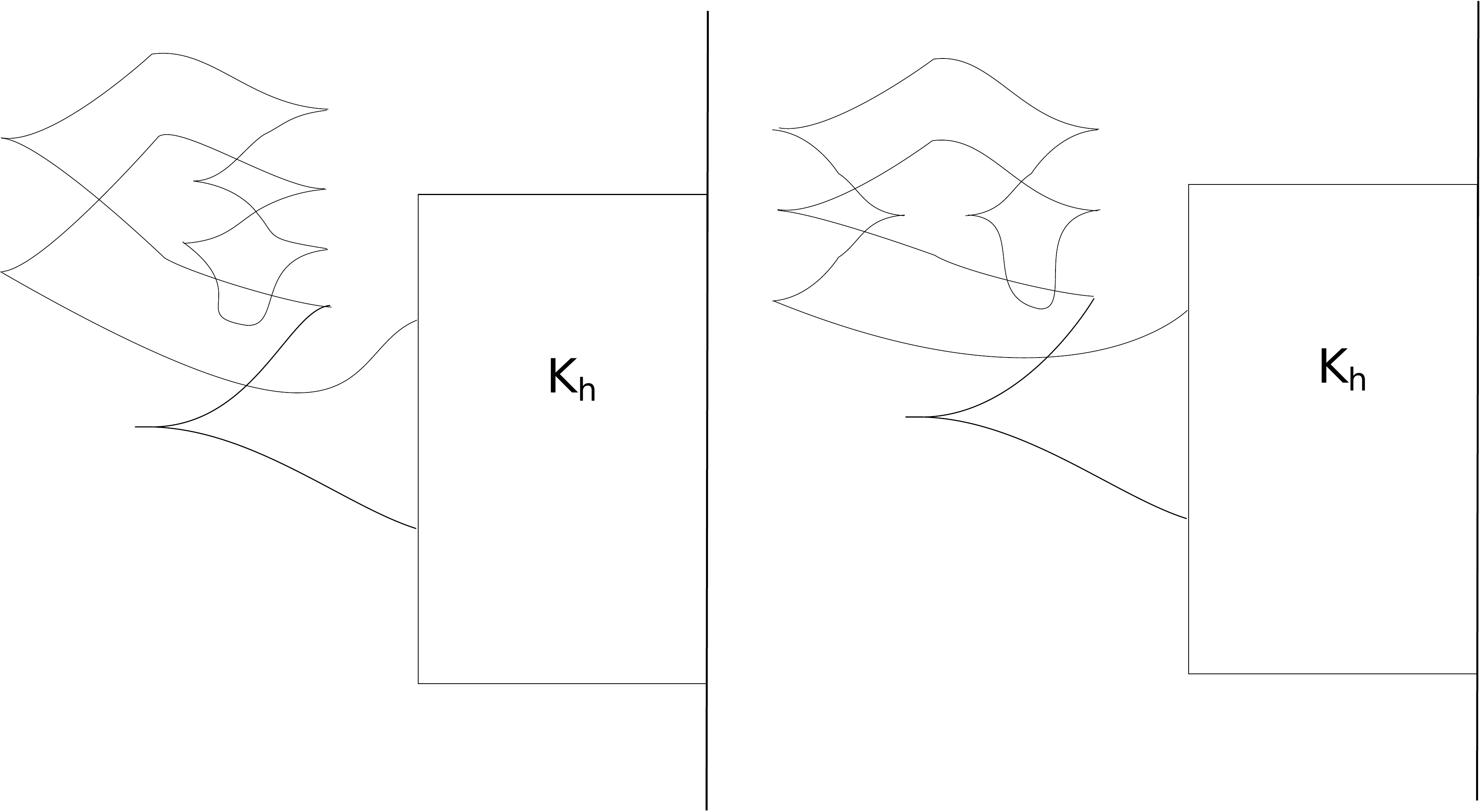}
\caption{The attachment of Chekanovs two knots to $K_h$}
\label{lkab}
\end{figure}

The resulting knots are isotopic and of the same Maslov index. We can see that since the Chekanov knots are formally Legendrian isotopic in $\R^3$ and we just attach our knot at one point, we can fix them at this point and construct this formal Legendrian isotopy inside a sphere outside the point (in the non-affine part, they are identical so nothing needs to be done). We can calculate the linearized contact homology for our new knots $K$ and $K'$ with coefficients in $\Z_2$ (to simplify the calculations) after resolving the front diagram. We separate the generators into three different kinds. We denote the generators originating from the knots $L,L'$ by $\alpha_1,...,\alpha_9$ and $\alpha'_1,...,\alpha'_9$ respectively. The crossings originating from $K_h$ are called $\beta_1,...,\beta_n$ and $\beta'_1,...,\beta'_n$ respectively. Note that $|\beta_i|=|\beta'_i|$. The crossing originating from the attachments we call $\gamma$ and $\gamma'$.
The part of the resolved knots coming from $K$ and $K'$ (which is the interesting part from the point of view of contact homology calculations) are depicted in Figure \ref{chekanovres}.
\begin{figure}
\includegraphics[scale=0.28]{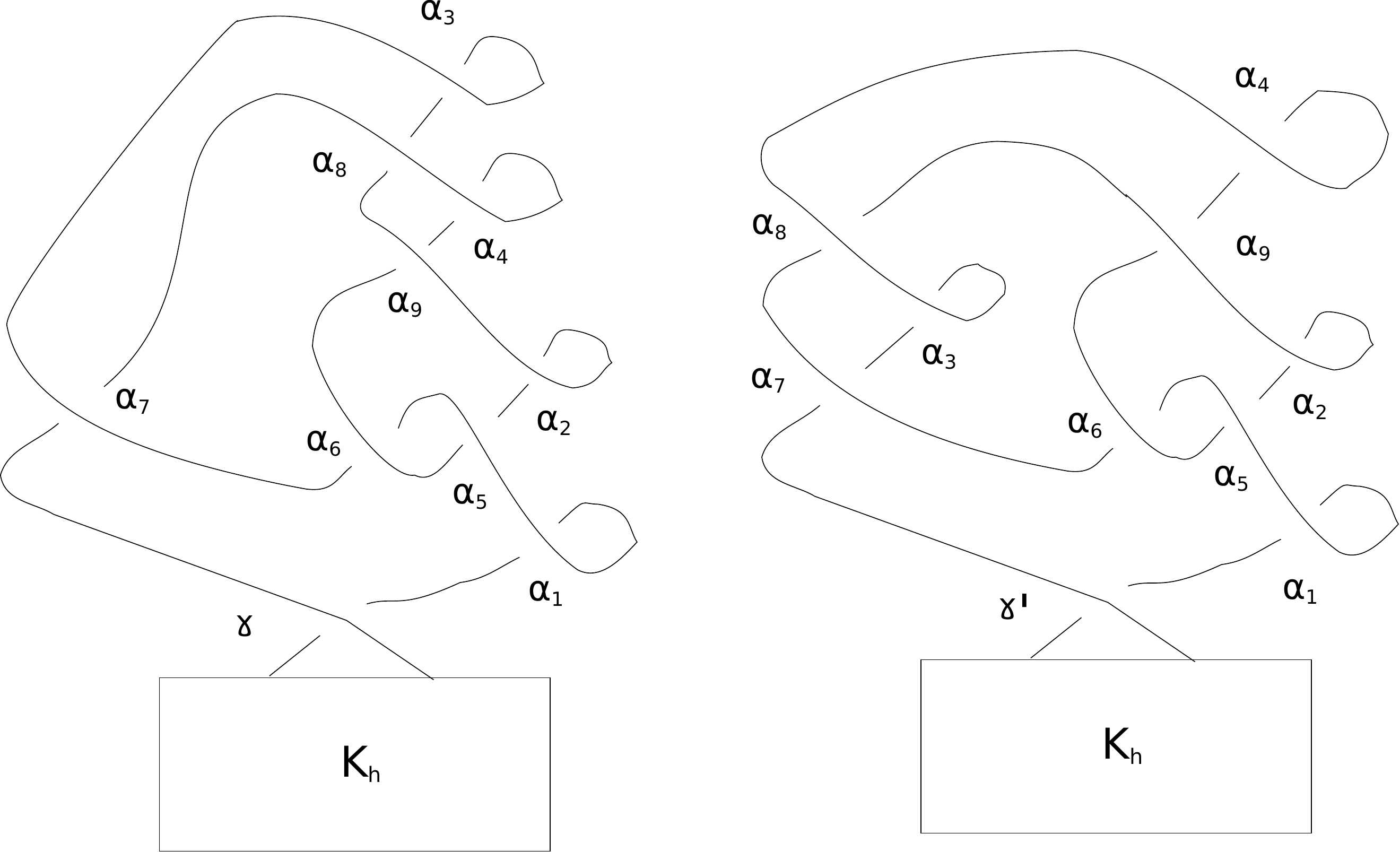}
\caption{The knots $K$ and $K'$ after attachment and resolution.}
\label{chekanovres}
\end{figure}

We can easily see that $|\gamma|=|\gamma'|=m(L)=m(L')=0$ and (following the capping paths that do not visit the handles) that 
$$|\alpha_i|=|\alpha'_i|=1,  i=1...4,$$
$$|\alpha_5|=2, |\alpha'_5|=0$$
$$|\alpha_6|=-2,|\alpha'_6|=0 $$
$$|\alpha_i|=|\alpha'_i|=0,  i=7...9.$$
While the degree is counted modulo the Maslov index of the entire knot, we can assume that this number is arbitrarily large by choosing an appropriate trivialization along the handles. This might change the degrees of the $\beta$ and $\beta'$ crossings, but will not change the degree of the other crossings since they can be computed without going through arcs. Since the grading of the algebra is done modulo something arbitrarily large the degrees $-2,2,0,1$ are distinct.
The differentials are then as follows for the $K$ knot.
$$d(\alpha_1)=1+\alpha_7\gamma+\alpha_5\alpha_6\alpha_7\gamma$$
$$d(\alpha_2)=1+\alpha_9+\alpha_9\alpha_6\alpha_5$$
$$d(\alpha_3)=1+\alpha_7\alpha_8$$
$$d(\alpha_4)=1+\alpha_8\alpha_9$$
$$d(\alpha_i)=d(\gamma)=d(\beta_j)=0, i\geq4.$$
We recall that an augmentation is an algebra homomorphism $c$ sending $\alpha_i$ to $\alpha_i+c_i$, $\beta_j$ to $\beta_j+b_j$ and $\gamma$ to $\gamma +g_1$ such that $c\circ d$ lacks constant terms and such that it acts as identity on generators of non-zero degree. Since the $\beta_j$ generators never appear in the differentials, any choice of $b_j$ (respecting the degree condition) works. We need to find solutions to a corresponding system of equations for the generators on which the differential acts non-trivially to find possible augmentations. Since we are working over $\Z_2$ this is not very difficult.
$$0=1+c_7g_1+c_5c_6c_7g_1$$
$$0=1+c_9+c_9c_6c_5$$
$$0=1+c_7c_8$$
$$0=1+c_8c_9$$
It is obvious that the only possibility which gives an augmentation is $c_7=c_8=c_9=g_1=1.$ with $c_i$ associated to $a_i$ and $g_1$ associated to $\gamma$.  
On the linearized level we have that 
$$d^1(\alpha_1)=\alpha_7$$
$$d^1(\alpha_2)=\alpha_9$$
$$d^1(\alpha_3)=\alpha_8+\alpha_7$$
$$d^1(\alpha_4)=\alpha_9+\alpha_8$$
It is easy to see that the homology is generated by $\alpha_i$ for $i=1...6$, $\gamma$ and the $\beta_i$'s.
Then the associated polynomial is $\lambda^2+4\lambda+\lambda^{-2}+p(\lambda)$ where 
$$p(\lambda)=\sum_{j=-\infty}^{\infty}\#\{i:|\beta_i|=j\}\lambda^j.$$
In the $K'$ case, we have the following differentials:
$$d(\alpha'_1)=1+\alpha'_7\gamma+\alpha'_5\alpha'_6\alpha'_7\gamma$$
$$d(\alpha'_2)=1+\alpha'_9+\alpha'_9\alpha'_6\alpha'_5$$
$$d(\alpha'_3)=1+\alpha'_8\alpha'_7$$
$$d(\alpha'_4)=1+\alpha'_8\alpha'_9$$
$$d(\alpha'_i)=d(\gamma')=d(\beta'_j)=0, i\geq4.$$
While the equations are the same as for $K$ the grading differs which gives us other possibilities for augmentations (and, as we will see, is the crucial difference between the knots).
The augmentations must again be $c'_7=c'_8=c'_9=g'_1=1$ with the added demand that while $c'_5c'_6=0$ one of them might be nonzero (which was earlier forbidden by the degree condition). Recall that $d^1$ is the linear term of the augmented differential $c\circ d\circ c$.
On the linearized level we have that 
$$d^1(\alpha_1)=\alpha'_7+c'_5\alpha'_6+c'_6\alpha'_5$$
$$d^1(\alpha_2)=\alpha'_9+c'_5\alpha'_6+c'_6\alpha'_5$$
$$d^1(\alpha_3)=\alpha'_8+\alpha'_7$$
$$d^1(\alpha_4)=\alpha'_9+\alpha'_8$$
The homology is then (again) generated by $\alpha'_i$ for $i=1...6$ and the $\beta'_i$ (for any solution of $c'_5c'_6=0$). Since $|\beta_i|=|\beta'_i|$ we have the associated polynomial as $2+4\lambda+p(\lambda)\neq\lambda^2+4\lambda+\lambda^{-2}+p(\lambda)$. Since we examined all possible augmentations in both cases, the sets of Chekanov-Poincar\'{e} polynomials associated to the knots both consist of a single element. Thus, the two knots are not Legendrian isotopic since it is enough to compare these elements.
\end{proof}
This theorem can be strengthened further using the constructions in the proof to prove Theorem \ref{t:main}, here restated.
\begin{theorem*}
For any $h\in H_1(P\times\R)$ and any positive integer $k$ there exists Legendrian knots $K_1,\dots, K_k$ realizing the homology class $h$ such that $K_i$ and $K_j$ are formally Legendrian isotopic but $K_i$ and $K_j$ are not Legendrian isotopic if $i\ne j$, $i,j\in\{1,\dots,k\}$. 
\end{theorem*}
\begin{proof}
We use a similar construction as in the proof of Theorem \ref{homologyknots}. Instead of taking one copy of Chekanov's knot $L$ (or $L'$) and attaching it to our knot $K_h$, we attach $i$ (with $0<i\leq k$) copies of $L$ and $k-1-i$ copies of $L'$ as depicted in Figure \ref{chekanovattach}. 
\begin{figure}
\includegraphics[scale=0.2]{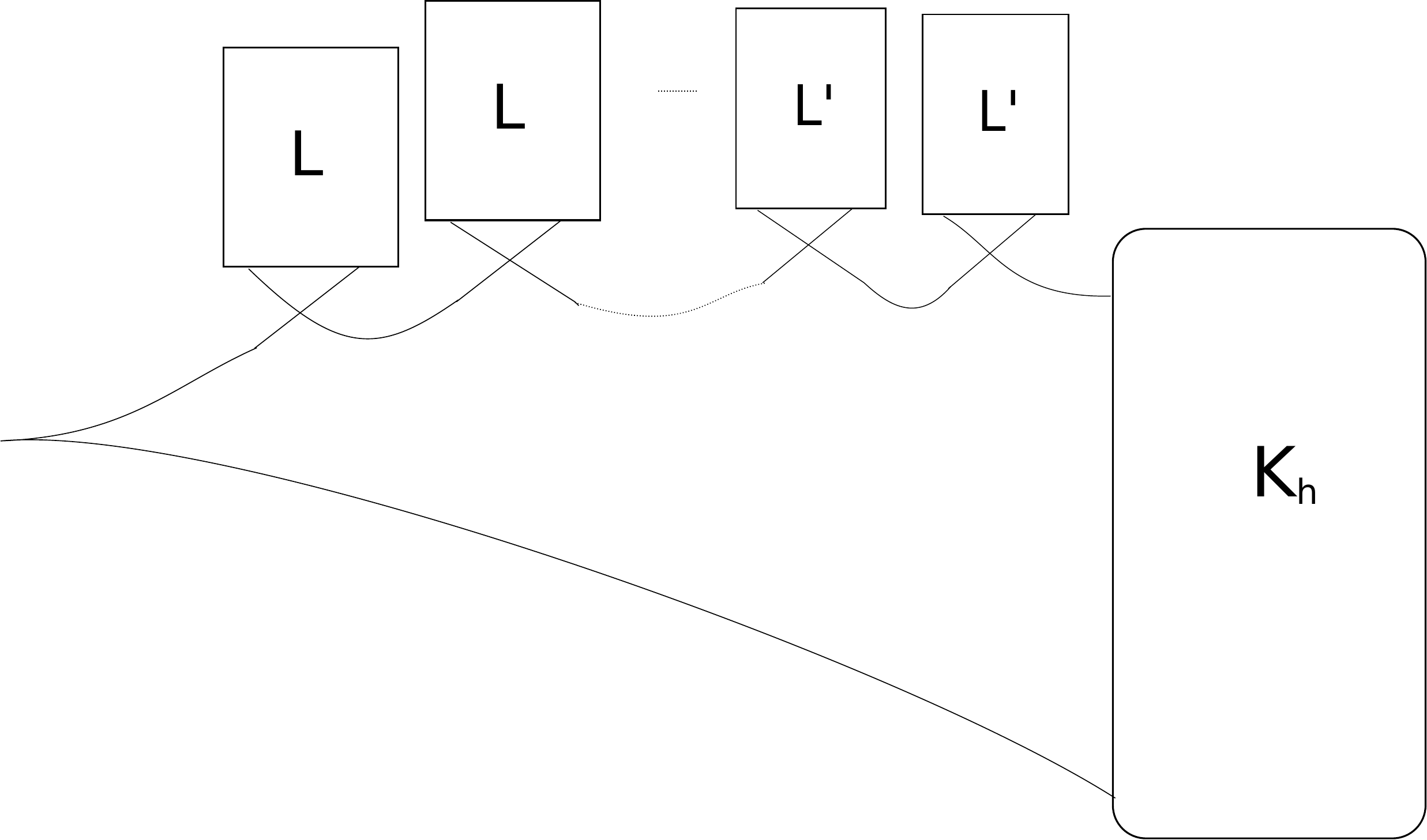}
\caption{Attaching many copies of Chekanovs knots.}
\label{chekanovattach}
\end{figure}
For each choice of $i$ we get a knot $K_i$, in total we get $k$ knots.
Clearly the $k$ knots will be formally Legendrian isotopic, by the same argument as before, after the gluing. We again use the Legendrian contact homology and linearize it. The copies of $L$ and $L'$ do not interact with each other, so we will get $n$ copies of the equations for the $K$ case and $k-1-n$ copies of the equations for the $K'$ case together with the $\beta$ equations. The associated polynomials will be $i\lambda^2+4\lambda+2k-2-2i+i\lambda^{-2}+p(\lambda)$. The polynomial $p(\lambda)$ is independent of $i$ (and is indeed the same polynomial as in the proof of Theorem \ref{homologyknots}). Hence, we have $k$ Legendrian knots which are not pairwise Legendrian isotopic by letting $i$ range from $0$ to $k-1$.
\end{proof}

\begin{example}
In Figure \ref{nollhomolog} we give an example of a knot which is zero in homology but not homotopically trivial in the case of $P$ being the punctured torus. Furthermore, this knot has a relatively simple contact homology, with four generators $a_1,...,a_4$ such that $da_2=da_4=0, da_1=a_2,da_3=a_4$. Using this knot we can again construct arbitrarily high numbers of pairwise non Legendrian isotopic knots in its homotopy class using the techniques above.
\end{example}
\begin{figure}
\includegraphics[scale=0.2]{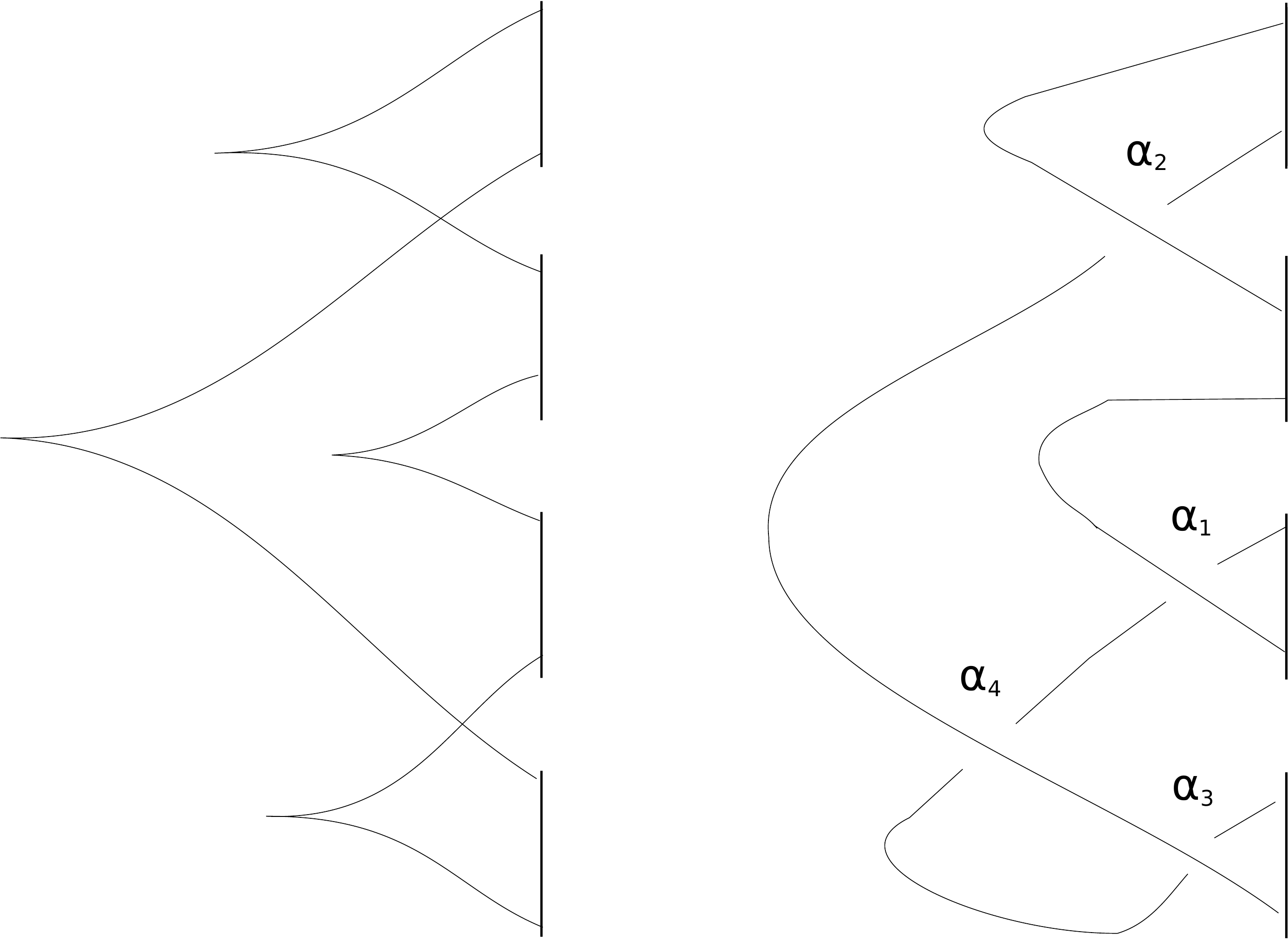}
\caption{The front diagram and resolution of a zero homologous knot that is not zero homotopic.}
\label{nollhomolog}
\end{figure}
\newpage
\section*{acknowledgement}
I would like to thank Tobias Ekholm for introducing me to the area of contact geometry and for many interesting conversations. I would also like to thank an anonymous referee for many helpful comments.

\end{document}